\numberwithin{equation}{section}
\numberwithin{figure}{section}
\newtheorem{thm}{Theorem}[section]
\newtheorem{lemma}[thm]{Lemma}
\newtheorem{prop}[thm]{Proposition}
\newtheorem{rem}[thm]{Remark}
\numberwithin{equation}{section}
\newcommand{\les}{\lesssim}
\newcommand{\lam}{{\lambda}}
\newcommand{\ve}{{\varepsilon}}
\newcommand{\de}{{\delta}}
\newcommand{\al}{{\alpha}}
\newcommand{\R}{{\mathbb R}}
\newcommand{\Z}{{\mathbb Z}}
\newcommand{\braxi}{\langle\xi\rangle}
\newcommand{\supp}{{\left(  \textbf{L}, \textbf{N}\right)}}
\def\normo#1{\left\|#1\right\|}
\def\norm#1{\|#1\|}
\def\abs#1{\left|#1\right|}
\def\bra#1{{\langle#1\rangle}}
\def\wt#1{\widetilde{#1}}
\def\wh#1{\widehat{#1}}
\global\long\def\R{\mathbf{\mathbb{R}}}%
\global\long\def\C{\mathbf{\mathbb{C}}}%
\global\long\def\Z{\mathbf{\mathbb{Z}}}%
\global\long\def\N{\mathbf{\mathbb{N}}}%
\global\long\def\jp#1{\langle#1\rangle}%
\global\long\def\norm#1{\|#1\|}%
\global\long\def\wt#1{\widetilde{#1}}%
\global\long\def\ve{\varepsilon}%
\global\long\def\al{\alpha}%
\global\long\def\ep{\epsilon}%
\begin{document}
\title[The modified scattering on 2d semi-relativistic equations]{The modified scattering of 2 dimensional semi-relativistic Hartree equations}
\author{Soonsik Kwon}
\address{Department of Mathematical Sciences, Korea Advanced Institute of Science
and Technology, 291 Daehak-ro, Yuseong-gu, Daejeon 34141, Republic
of Korea}
\email{soonsikk@kaist.edu}
\author{Kiyeon Lee}
\address{Stochastic Analysis and Application Research Center(SAARC), Korea
Advanced Institute of Science and Technology, 291 Daehak-ro, Yuseong-gu,
Daejeon, 34141, Republic of Korea}
\email{kiyeonlee@kaist.ac.kr}
\author{Changhun Yang}
\address{Department of Mathematics, Chungbuk National University, Chungdae-ro1,
Seowon-gu, Cheongju-si, Chungcheongbuk-do, Republic of Korea}
\email{chyang@chungbuk.ac.kr}
\begin{abstract}
In this paper, we consider the asymptotic behaviors of small solutions
to the semi-relativistic Hartree equations in two dimension. The nonlinear
term is convolved with the Coulomb potential $|x|^{-1}$, and it produces
the\emph{ long-range interaction} in the sense of scattering phenomenon.
From this observation, one anticipates that small solutions converge
to a modified scattering states, although they decay as linear solutions.
We show the global well-posedness and the modified scattering for
small solutions in weighted Sobolev spaces. Our proof follows a road
map of exploiting the space-time resonance by \cite{gemasha2008,pusa}.
Compared to the result in three dimensional case \cite{pusa}, weaker
time decay in two dimension is one of the main obstacles.
\end{abstract}

\maketitle

\section{Introduction}
\subsection{The equation and previous results}
We consider the semi-relativistic Hartree equations
with {\em Coulomb} potential
\begin{align}
\left\{ \begin{aligned}-i\partial_{t}u+\sqrt{m^{2}-\Delta}u & =\lam\left(|x|^{-1}*|u|^{2}\right)u\qquad &&\mathrm{in}\;\;\R\times\mathbb{R}^{d},\\
u(0,\cdot) & =u_{0} &&\mathrm{in}\;\;\mathbb{R}^{d},
\end{aligned}
\right.\label{main-eq:semi}
\end{align}
where the unknown $u:\mathbb{R}^{1+d}\to\mathbb{C}$ and some fixed
constant $\lam\in\R$. The nonlocal differential operator $\sqrt{m^{2}-\Delta}$
is defined as a Fourier multiplier operator associated to the symbol
$\sqrt{m^{2}+|\xi|^{2}}$ and $\ast$ denotes the convolution in $\mathbb{R}^d$. Here we consider the mass parameter
$m>0$, so we normalize $m=1$ throughout the paper. For three dimensional case $d=3$, \eqref{main-eq:semi} is often referred to as \textit{Boson star equation} which describes the dynamics and collapse of relativistic Boson stars. It was rigorously derived as the mean-field limit of large systems of bosons. See \cite{Michelangeli2012,frjonlenz2007-nonlinearity,Lieb1987} and references therein. 

The mass and energy of solution to \eqref{main-eq:semi} are defined by  
\begin{align}
    M(u)(t)&= \|u(t)\|_{L^2(\R^d)}, \label{mass conservation}\\ 
    E(u)(t)&= \frac12\int_{\R^d}\overline{u}\sqrt{1-\Delta} u dx 
    + \frac{\lambda}{4}\int_{\R^d}\left(|x|^{-1}\ast |u|^2\right) |u|^2 dx \nonumber
\end{align}
respectively and conserved as time evolves. From the conservation laws, one sees that $H^\frac12$ is the energy space. Furthermore,
in massless case $m=0$, we have the scaling symmetry. If $u$ is solution to \eqref{main-eq:semi}, $u_\alpha$ defined by 
$u_\alpha(t,x)=\alpha^{\frac{d}{2}}u(\alpha t,\alpha x)$ are also solutions and the mass is invariant under the scaling, i.e., $\|u(t)\|_{L^2(\R^d)}=\|u_\alpha(t)\|_{L^2(\R^d)}$, thus \eqref{main-eq:semi} is $L^2$-critical.


There are numerous local and global well-posedness results for the semi-relativistic
Hartree equations \eqref{main-eq:semi}. 
A first result was obtained in \cite{lenz2007} where the local well-posedness in $H^{s}(\R^{3})$ for $s\ge\frac{1}{2}$ and global well-posedness in the energy space $H^{\frac12}(\R^{3})$ were established. This result was extended to other dimensions $d\ge2$ in \cite{choz2006-siam}. Also, the authors in \cite{choz2006-siam} established the low regularity well-posedness below the energy space, more precisely, the local well-posedness in $H^s(\R^d)$ for $s>\frac12-\frac{d-1}{4d}$.
This result was later improved in \cite{hele2014} and \cite{lee2021-bkms} where the 
local well-posendess in $H^s(\R^3)$ for $s\ge\frac14$ and $H^s(\R^2)$ for $s>\frac38$ were proved, respectively. 


The aim of this paper is to study the asymptotic behaviors of solutions to \eqref{main-eq:semi} when $d=2$. By a \textit{scattering}, we mean a solution to nonlinear PDEs converges to a solution of the linear equation as time goes to infinity. This phenomenon has been observed in various dispersive equations. Concerning our equation,  let us consider the following generalized model 
\begin{align}\label{eq:gamma}
-i\partial_{t}u+\sqrt{1-\Delta}u =\lam\left(|x|^{-\gamma}*|u|^{2}\right)u,\quad 0<\gamma<d, \quad \mathrm{in}\;\;\R\times\mathbb{R}^{d}.
\end{align}
The asymptotic behavior of solutions to \eqref{eq:gamma} varies depending on the potential, i.e., the range of $\gamma$. To see this, 
by Duhamel's principle, we write \eqref{eq:gamma} as the integral equation
\begin{align*}
u(t) = e^{it\sqrt{1-\Delta}}u_0 + \lam\int_0^t e^{i(t-s)\sqrt{m^2-\Delta}} \left(|x|^{-\gamma}*|u(s)|^{2}\right)u(s)ds.
\end{align*}
Observe that if \eqref{eq:gamma} scatters a linear profile, it would be defined as
\begin{align*}
    u_0+\lim_{t\rightarrow\infty}
    \lam\int_0^t e^{-is\sqrt{1-\Delta}} \left(|x|^{-\gamma}*|u(s)|^{2}\right)u(s)ds.
\end{align*}
By using the well-known time decay estimates of linear solution (see \cite[Lemma~3]{MNO2003})
\begin{align}\label{standard time decay}
    \| e^{it\sqrt{1-\Delta}} u_0 \|_{L^\infty(\R^d)} \les \langle t\rangle^{-\frac d2} \quad \text{for} \quad u_0\in C_0^\infty(\R^d),
\end{align}
one verifies that the time decay of $L^2$ norm of the nonlinearity, computed on a solution to the linear equation, is $t^{-\gamma}$  \footnote{We refer to \cite[Section~4]{choz2006-siam} for the precise statement and detailed proof.}
\begin{align*}
    \normo{(|x|^{-\gamma}*|e^{it\sqrt{1-\Delta}} u_0|^{2})\,e^{it\sqrt{1-\Delta}} u_0}_{L^{2}(\R^d)}\sim |t|^{-\gamma} \quad \text{ for } |t|\gg 1.
\end{align*}
Thus, one may infer that there can not exist a linear profile if $0<\gamma\le1$ with which 
the nonlinearity  is called a \textit{long-range interaction}. Indeed, 
the authors in \cite{choz2006-siam} proved that \eqref{eq:gamma} failed to scatter 
when $0<\gamma\le1$  for  $d\ge3$  and $0<\gamma<\tfrac{d}{2}$  for  $d=1$ or $2$.
On the other hand, for the case $1<\gamma<d$ which is called a \textit{short-range interaction} we may expect the scattering. The first scattering result for the case $2<\gamma<d$ and $d\ge3$ was obtained in \cite{choz2006-siam} and the gap corresponding to $1<\gamma\le 2$ was later closed in \cite{pusa} for $d=3$ and in \cite{hanaog2015-die} for $d\ge3$. 
Recently, scattering result for two dimensional case when $1<\gamma<2$ was established in \cite{YCH}.
Lastly, we refer to \cite{choz2007-jkms,choz2008-dcds-s,chozhishim2009-dcds} for related works.


Now, let us focus on the case $\gamma=1$, which corresponds to our main equation. We refer to this as the ``scattering-critical'' case, 
because the time integration barely fails to integrable, or diverges logarithmically. 
We generally anticipate a \textit{modified scattering} result for solutions
to equations which have the scattering critical nonlinearity. The
modified scattering means that a global solution decays as linear
solutions, but converges to a linear solution \emph{with a suitable 
correction} (eg. a phase modification). In the area of nonlinear dispersive
equations, the first modified scattering results was established in \cite{ozawa1991} for one dimensional cubic nonlinear Schr\"odinger equations (NLS). This result was extended to higher
dimension in \cite{hayashi-naumkin1998} and the authors also proved the modified scattering for NLS with Hartree nonlinear terms for $d\ge2$
\begin{align}\label{eq:schrodinger}
    -i\partial_{t}u+\Delta u =\lam\left(|x|^{-1}*|u|^{2}\right)u,\quad \mathrm{in}\;\;\R\times\mathbb{R}^{d}.
\end{align}
Later, in \cite{kapu}, the authors reproved the modified scattering for \eqref{eq:schrodinger}, the same equations addressed in \cite{hayashi-naumkin1998}, by the different technique called \textit{space-time resonance argument} which was introduced in \cite{gemasha2008,gemasha2012-jmpa,gemasha2012-annals}. 
We should mention that the algebraic structure of Schr\"odinger symbol plays a crucial role in their proof. 
Concerning our equation \eqref{main-eq:semi} where the linear operator is nonlocal, the structure of resonance is more involved, so we have to induce a different asymptotic behavior of a solution. Also, relatively higher regularity assumption on initial data is required.

The modified scattering result of \eqref{main-eq:semi} for three dimensional case was proved in \cite{pusa}. We also refer to \cite{hayashi-naumkin2017-henri,iopu2014,sautwang2021-compde} 
where the nonlinear equation with non-local differential operator was studied.
Inspired by work \cite{pusa}, we investigate an asymptotic behavior of solution to \eqref{main-eq:semi} when $d=2$.

\medskip 

\subsection{Main results and ideas}\label{sec:main-idea}
We now state our main theorem for the two dimensional semi-relativistic Hartree equations \eqref{main-eq:semi}:

\begin{thm} \label{main-thm:semi} Let $n\ge1000$ and $k=\frac{n}{100}$.
There exists $\overline{\ve_{0}}>0$ satisfying the following:

Suppose that the initial data $u_{0}$ is sufficiently small in a weighted space.
In other words, for any $\ve_{0}\le\overline{\ve_{0}}$, $u_{0}$
satisfies 
\begin{align}
\|u_{0}\|_{H^{n}(\R^2)}+\|\langle x\rangle^{2}u_{0}\|_{H^{2}(\R^2)}+\|\jp{\xi}^{k}\widehat{u_{0}}\|_{L^{\infty}(\R^2)}\le\ve_{0}.\label{condition-initial:semi}
\end{align}
Then the Cauchy problem \eqref{main-eq:semi} with the initial data $u_{0}$
has a unique global solution $u$ to \eqref{main-eq:semi} decaying
as 
\begin{align}
\|u(t)\|_{L^{\infty}(\R^2)}\les\ve_{0}\bra{t}^{-1}.\label{global-bound:semi}
\end{align}
Moreover, there exists a scattering profile $u_{\infty}$ such that
\begin{align}
\left\Vert \bra{\xi}^{k}\mathcal{F}\left[u(t)-e^{iB(t,D)}e^{-it\bra{D}}u_{\infty}\right]\right\Vert _{L_{\xi}^{\infty}(\R^2)}\les\ve_{0}\bra{t}^{-\de},\label{eq:modified-scattering}
\end{align}
for some $0<\de< \frac{1}{100}$. 
Here, the phase modification is defined by 
\begin{align}\label{formula of B}
    B(t,\xi)=\frac{\lam}{(2\pi)^{2}}\int_{0}^{t}\left( \int_{\R^{2}}\left|\frac{\xi}{\bra{\xi}}-\frac{\sigma}{\bra{\sigma}}\right|^{-1}|\wh{u}(\sigma)|^{2}d\sigma \right) \frac{\rho(s^{-\frac{2}{n}}\xi)}{\bra{s}}ds,    
\end{align}
where $\rho$ is a smooth compactly supported function.
\end{thm}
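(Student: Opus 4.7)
The plan is to follow the space-time resonance method of Germain-Masmoudi-Shatah, as deployed in \cite{pusa} for the three dimensional analogue, and adapt it to the much weaker $\jp{t}^{-1}$ linear decay available in two dimensions. I would first set up the problem on the profile $f(t):=e^{it\bra{D}}u(t)$, whose Fourier transform satisfies an equation of the schematic form
\begin{align*}
    \partial_t\wh{f}(t,\xi)=c\lam\iint e^{it\phasep(\xi,\eta,\sigma)}\,\frac{1}{|\eta|}\,\wh{f}(t,\xi-\eta)\,\ol{\wh{f}(t,\sigma-\eta)}\,\wh{f}(t,\sigma)\,d\eta\,d\sigma,
\end{align*}
where $\phasep(\xi,\eta,\sigma)=\bra{\xi}-\bra{\xi-\eta}+\bra{\sigma-\eta}-\bra{\sigma}$ and $|\eta|^{-1}$ is the Fourier multiplier of the $2$d Riesz potential $|x|^{-1}$. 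I would then run a continuity argument on three norms: a high Sobolev norm $\|u(t)\|_{H^n}$ that is almost conserved, a weighted low-regularity norm $\|\bra{x}^2u(t)\|_{H^2}$ allowed to grow as a small power of $\jp t$, and a decay-capturing norm $\|\bra{\xi}^k\wh{f}(t,\xi)\|_{L^\infty_\xi}$ whose control is the central output. The pointwise bound \eqref{global-bound:semi} follows from the last norm via a stationary phase application of the linear decay \eqref{standard time decay} to $u=e^{-it\bra{D}}f$.

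The heart of the matter is the resonance analysis of $\phasep$. A direct computation shows that $\nabla_\sigma\phasep=0$ iff $\tfrac{\sigma-\eta}{\bra{\sigma-\eta}}=\tfrac{\sigma}{\bra{\sigma}}$, i.e., $\eta=0$, and that $\phasep$ itself vanishes along $\{\eta=0\}$. Thus the space-time resonant set is precisely the set $\{\eta=0\}$, which is also where the Coulomb multiplier $|\eta|^{-1}$ is singular. I would isolate the resonant contribution by localizing via a smooth cut-off $\rho$ at the scale $s^{-2/n}$ (matching the scaling inside \eqref{formula of B}) and applying stationary phase in $\sigma$; after the change of variable $\eta\mapsto\tfrac{\xi}{\bra{\xi}}-\tfrac{\sigma}{\bra{\sigma}}$, the resulting leading term is purely imaginary at the level of $\wh{f}(\xi)$ and equals exactly the integrand in \eqref{formula of B}, so it can be absorbed into the phase correction $e^{iB(t,\xi)}$. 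The error from the cut-off gains a factor $\jp{s}^{-\delta}$ for some $\delta>0$ from the size of the frequency window.

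Off the resonant region I would decompose into the space-resonant region $\{|\nabla_\sigma\phasep|\ll 1\}$, the time-resonant region $\{|\phasep|\ll 1\}$, and their joint complement. In the space-resonant region one integrates by parts in $\sigma$, paying $\nabla_\sigma\wh{f}$, which is controlled by $\|\bra{x}^2 u\|_{H^2}$; in the time-resonant region one integrates by parts in $s$, producing boundary and quadrilinear commutator terms again controlled by the bootstrap norms; in the non-resonant region both mechanisms are available simultaneously. After a Littlewood-Paley decomposition indexed by $\supp$ and multilinear estimates interpolating among the Sobolev, weighted, and $\bra{\xi}^k L^\infty_\xi$ norms, each dyadic piece gains an integrable factor $\jp{s}^{-1-\delta}$, closing the bootstrap and yielding the remainder bound \eqref{eq:modified-scattering}.

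The main obstacle, and what genuinely distinguishes the $2$d analysis from \cite{pusa}, is the loss of a full half-power of dispersive decay: the raw nonlinearity sits right at the $L^1_t$ borderline with no Hölder margin, so every multilinear estimate must be tight and every endpoint recovered by extra frequency localization or extra regularity. The choice of the very high Sobolev index $n\ge 1000$, the weight $\bra{\xi}^k$ with $k=n/100$ on $\wh{f}$, the exact cut-off scale $s^{-2/n}$ appearing in $B(t,\xi)$, and the careful handling of the low-frequency part of the Coulomb kernel $|\eta|^{-1}$ will all be essential ingredients for extracting the needed $\delta$-gain from what is otherwise a logarithmically divergent resonant contribution.
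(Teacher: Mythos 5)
Your high-level setup — profile $f=e^{it\langle D\rangle}u$, bootstrap on the three norms $\|u\|_{H^n}$, $\|\langle x\rangle^2 e^{it\langle D\rangle}u\|_{H^2}$, $\|\langle\xi\rangle^k\widehat f\|_{L^\infty_\xi}$, stationary-phase decay, and the correct identification of the space-time resonant set as $\{\eta=0\}$ with the phase correction produced by the Fourier inversion of the Coulomb symbol $|\eta|^{-1}$ — all matches the paper in spirit. But there are two concrete problems.

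First, you have the region/integration-by-parts pairing backwards. In the region you name ``space-resonant,'' $\{|\nabla_\sigma\mathbf{p}|\ll 1\}$, integration by parts in $\sigma$ gains nothing since $1/|\nabla_\sigma\mathbf{p}|$ is large there; that is precisely the region where you would need to integrate in $s$ (where $|\mathbf{p}|\gtrsim 1$), and IBP in $\sigma$ belongs to the region where $|\nabla_\sigma\mathbf{p}|$ is bounded below. Second, the cutoff $\rho(s^{-2/n}\xi)$ in the formula \eqref{formula of B} localizes the \emph{output} frequency $\xi$; it is not the cutoff that isolates the resonance. The paper isolates the singular/resonant contribution by a separate cutoff $\rho_{\le L_0}(\eta)$ in the variable carrying the Coulomb singularity, with $L_0\sim M^{-9/10}$, while the $\xi$-cutoff at scale $s^{2/n}$ only controls the growth of $\langle\xi\rangle^k$ in the phase- and profile-approximation steps. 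Conflating the two scales will break the Taylor-expansion and Riemann-sum error estimates (Steps 1--3 of the proof of \eqref{eq:crucial-part}).

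Beyond these errors, your proposed route also diverges from the paper's: the paper never integrates by parts in $s$. In both the weighted energy estimates (Section~\ref{sec:Weighted-Energy-estimate}) and the scattering argument (Section~\ref{sec:scattering}) it uses only space-resonance IBP in momentum variables, the null structure $\nabla_\xi\phi(\xi,\eta)|_{\eta=0}=0$ cancelling $|\eta|^{-1}$, and dyadic decompositions in $|\eta|,|\sigma|$; the resonant piece $|\eta|\le L_0$ is removed by three explicit approximations rather than a GMS trichotomy. A full space/time/non-resonant decomposition with IBP in $s$ is not obviously doomed, but given the borderline $t^{-1}$ decay the extra Duhamel iteration it spawns is delicate, and you would have to fix the region mismatch and the cutoff confusion before any of these estimates would close.
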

\begin{rem}
We do not pursue to optimize the regularity
indices $n$ and $k$  and time decay $\delta>0$ in Theorem \ref{main-thm:semi}.
\end{rem}
\begin{rem}
We prefer to express the formula of phase modification \eqref{formula of B} in the fourier space because it can be seen not only from some heuristic consideration (see \eqref{HD} below) but also in our rigorous proof.

Furthermore, we observe that convergence in the weighted $L^\infty$ norm in \eqref{global-bound:semi} immediately implies the convergence in $L^2$ spaces.
\end{rem}
\begin{rem}
The time decay rate of solutions in \eqref{global-bound:semi} is optimal in the sense that 
the nonlinear solutions decay as the same rate with the linear ones \eqref{standard time decay}.
\end{rem}

Theorem \ref{main-thm:semi} contains the global existence and asymptotic
behaviors of small solutions to \eqref{main-eq:semi}. Our proof to obtain the global existence of solutions is basically based on the bootstrap argument in a weighted Sobolev space, and then the next crucial part is to perform a suitable phase correction and find a modified scattering state.
Briefly, the proof of Theorem \ref{main-thm:semi} consists of threefold.
First, we find the time decay of solutions to \eqref{main-eq:semi}, from which we construct a function space which consists of the weighted energy norm and scattering norm. Then, the second step is to show that the small solutions stay small as long as they exist by performing the weighted energy estimates. Our strategy is based on the method of space-time resonance which was introduced in \cite{gemasha2008,gemasha2012-jmpa,gemasha2012-annals,pusa}. 
The final step is to obtain the bound for the scattering norm in the function space.  It is in this step that a suitable correction of the phase based on the Taylor expansion is required to close the bootstrap argument.
Collecting all from the three steps, we can finally obtain the modified scattering results for \eqref{main-eq:semi}.

Let us explain in detail the ideas of proof in each step.
In the first step, we use the standard stationary phase method on  oscillatory integral to derive the time decay of linear solutions, $t^{-1}$. 
By a direct proof, without resorting to well-known $L^p-L^q$ estimates (e.g \cite[Lemma~3]{MNO2003}), we manage to obtain the time decay of solutions up to higher ($k$ th) order derivative, which is essential in the course of weighted energy estimates to overcome the lack of time decay compared to higher dimensional cases. 
To fully utilize the time decay of solutions, we construct our solution space based on the weighted $L^2$-norms.

In the second step, we show that the small nonlinear solutions stay small during the existing time by performing the weighted energy estimates. We introduce the interaction representation of solutions $u(t)$ so as to track the scattering states 
\begin{align}
f(t,x):=e^{it\langle D\rangle}u(t,x).\label{eq:interation}
\end{align}
Then we can express $f$ via Duhamel's representation
\begin{align}
\begin{aligned}\widehat{f}(t,\xi) & =\widehat{u_{0}}(\xi)+i\lam\mathcal{I}(t,\xi),\\
\mathcal{I}(t,\xi) & = \frac{1}{2\pi}\int_{0}^{t}\int_{\mathbb{R}^{2}\times\mathbb{R}^{2}}e^{is\phi(\xi,\eta)}|\eta|^{-1}\widehat{|u|^2}(\eta)\wh{f}(s,\xi-\eta)d\eta ds
\end{aligned}
\label{eq:duhamel}
\end{align}
with the resonance function 
\begin{align}
\phi(\xi,\eta)=\braxi-\langle\xi-\eta\rangle.\label{eq:resonance-ftn}
\end{align}
In the course of weighted energy estimates, we should bound the $xf$ and $x^2f$ in $L^2$ which are converted to $\nabla \widehat{f}$ and $\nabla^2 \widehat{f}$ in the fourier space, respectively.
The main task is to not only bound the singularity $|\eta|^{-1}$, but also recover the time growth resulting from the derivative $\nabla_\xi$ taken to exponential term. 
Indeed, the most delicate term occurs from $\nabla^2 \widehat{f}$ when two derivatives both fall on $e^{is\phi(\xi,\eta)}$
\begin{align}\label{delicate term}
    \frac{1}{2\pi}\int_{0}^{t}\int_{\mathbb{R}^{2}}s^2\big( \nabla_\xi \phi(\xi,\eta) \big)^2 e^{is\phi(\xi,\eta)}|\eta|^{-1}\widehat{|u(s)|^2}(\eta)\wh{f}(s,\xi-\eta) d\eta ds,
\end{align}
where we have to compensate the time growth $s^{2}$. Here, we encounter the main difficulty from two dimensional nature, i.e., the weaker time decay $|s|^{-1}$ of solutions in contrast to three or higher dimensional problem, 
because $L^{2}$-norm of cubic nonlinearity in \eqref{delicate term} enjoys at most $s^{-2}$ decay which is not sufficient to compensate the time growth for \eqref{delicate term} being integrable in time. 
Nevertheless, since the singularity $|\eta|^{-1}$ near the origin is weaker compared to three or higher dimensional case where $\mathcal{F}(|x|^{-1})(\eta) = C_d |\eta|^{-d+1}$, we anticipate that this advantage leads us to obtaining an extra time decay. One of key observations, as already observed in \cite{pusa}, is the null structure from the phase function
\begin{align}\label{Null structure}
    \nabla_\xi \phi(\xi,\eta) \Big|_{\eta=0}
    =\nabla_\xi \Big( \braxi-\langle\xi-\eta\rangle \Big) \Big|_{\eta=0} = 0.
\end{align}
The null structure removes the singularity near the origin, more precisely, the multiplier in \eqref{delicate term} behaves as 
$$|\nabla_\xi \phi(\xi,\eta)|^2|\eta|^{-1} \sim |\eta|, \; \; \text{ if } \; |\eta|,|\xi|\les 1.$$
Using this, we can heuristically regard $|\eta| \approx |s|^{-1}$ in the analysis respect. Indeed, for $|\eta|\ge |s|^{-1}$, we can exploit the space resonance, in other words, we can apply an integration by parts to the following quadratic term  
\begin{align*}
    \widehat{ |u(s)|^2 } (\eta)
    = \frac{1}{(2\pi)^2}\int_{\R^2} e^{is(\langle \sigma+\eta\rangle - \langle\sigma\rangle )}\widehat{f}(s,\sigma)\overline{\widehat{f}(s,\eta+\sigma}) d\sigma,
\end{align*}
by using $|\nabla_\sigma(\langle \sigma+\eta\rangle - \langle\sigma\rangle )|\sim |\eta|$ for $|\eta|,|\sigma|\les1$ to derive an additional time decay at the cost of $|\eta|^{-1}$.
In the rigorous proof below, we then can control the $xf$ and $x^2f$ in $L^2$, allowing a small growth in $t$.

As mentioned above, since our main equation has the {\em long-range}
nonlinearity, the nonlinearity occurs the logarithm divergence
in terms of time integration. To overcome this difficulty, we employ
a phase modification \eqref{eq:modified-scattering} from the singular potential $|x|^{-1}$ and obtain an extra logarithm time decay by following the argument in \cite{pusa,kapu}. We begin with writing the nonlinear term as 
\begin{align*}
    & \mathcal{I}(t,\xi)=\frac{1}{(2\pi)^3}\int_{0}^{t}\iint_{\R^{2}\times\R^{2}}e^{isp(\xi,\eta,\sigma)}|\eta|^{-1}\wh{f}(s,\xi+\eta)\wh{f}(s,\xi+\sigma)\overline{\wh{f}(s,\xi+\eta+\sigma)}d\eta d\sigma ds,
   \end{align*} 
   where 
   \[
    p(\xi,\eta,\sigma)=\braxi-\langle\xi+\eta\rangle-\langle\xi+\sigma\rangle+\bra{\xi+\eta+\sigma}.
   \]
   Let us assume that $|\xi|\les 1$. By Taylor expansion, the phase function is approximated by
 \begin{align*}
    p(\xi,\eta,\sigma)=\eta\cdot\left(\frac{\xi}{\braxi}-\frac{\xi+\sigma}{\bra{\xi+\sigma}}\right)+O\left(|\eta|^{2}\right).
 \end{align*}  
 Then, neglecting all contributions that decay faster than $|s|^{-1}$, we can approximate the above integration as 
\begin{align}\begin{aligned}\label{HD}
  &\frac{1}{(2\pi)^3}\iint_{|\eta|\lesssim |s|^{-1+}}e^{is\eta\cdot\left(\frac{\xi}{\braxi}-\frac{\xi+\sigma}{\bra{\xi+\sigma}}\right)}|\eta|^{-1}\wh{f}(s,\xi)\wh{f}(s,\xi+\sigma)\overline{\wh{f}(s,\xi+\sigma)}d\eta d\sigma  \\
  &=\frac{1}{(2\pi)^3}\widehat{f}(s,\xi) 
  \iint_{\R^{2}\times\R^{2}} e^{is\eta\cdot\left(\frac{\xi}{\braxi}-\frac{\xi+\sigma}{\bra{\xi+\sigma}}\right)}|\eta|^{-1} d\eta \big|\widehat{f}(s,\xi+\sigma)\big|^2 d\sigma + O(s^{-1-}) \\ 
  &=\frac{1}{2\pi}\widehat{f}(s,\xi)\int_{\R^2} \mathcal{F}^{-1}(|\eta|^{-1}) \left(s\big(\tfrac{\xi}{\braxi}-\tfrac{\sigma}{\bra{\sigma}}\big)\right)\big|\widehat{f}(s,\sigma)\big|^2 d\sigma+ O(s^{-1-}),
\end{aligned}\end{align}
under the suitable assumption on $f$. Then, we obtain 
\begin{align*}
    \partial_t\widehat{f}(t,\xi) = it^{-1}\frac{1}{(2\pi)^2}\widehat{f}(t,\xi)\int_{\R^2}\left|\frac{\xi}{\braxi}-\frac{\sigma}{\bra{\sigma}}\right|^{-1}\big|\widehat{f}(t,\sigma)\big|^2 d\sigma + O(t^{-1-})
\end{align*}
which implies the modified scattering property \eqref{eq:modified-scattering} and \eqref{formula of B}. The rigorous analysis for error terms will be achieved by identifying suitable scale in $\eta$ with respect to time, say $s^{-1+}$, and then, by exploiting the space resonance for $|\eta|\gtrsim s^{-1+}$.

\subsection{Motivation: 2d models}
The two dimensional semi-relativistic equation \eqref{main-eq:semi} might be regarded as a simplified model of the Chern-Simons-Dirac system under the Coulomb gauge condition \footnote{ \, We refer to \cite{bourcanma2014-dcds} for its derivation.}
\begin{align}
    (-i\partial_{t}+\al\cdot D+m\beta)\psi & =N(\psi,\psi)\psi\qquad\mathrm{in}\;\;\R\times\mathbb{R}^{2},\tag{CSD-C}\label{eq:csd-coulomb}
    \end{align}
where the unknown $\psi:\R^{1+2}\to\C^{2}$ and the nonlinear term is given as 
\[
N(\psi,\psi)=\frac{1}{\Delta}\left[\Big(\partial_{1}(\psi^{\dagger}\al^{2}\psi)-\partial_{2}(\psi^{\dagger}\al^{1}\psi)\Big)+\Big(\partial_{2}(|\psi|^{2})\al^{1}-\partial_{1}(|\psi|^{2})\al^{2}\Big)\right]
\]
with Dirac matrices $\al^{j},\beta$ defined as 
\begin{align*}
\al^{1}=\begin{bmatrix}0 & i\\
-i & 0
\end{bmatrix},\quad\al^{2}=\begin{bmatrix}0 & 1\\
1 & 0
\end{bmatrix},\quad\beta=\begin{bmatrix}1 & 0\\
0 & -1
\end{bmatrix}.
\end{align*} 
One of strategy to deal with Dirac operator is, as introduced in \cite{anfosel}, to diagonalize the system using the following identity
\begin{align*}
    \al\cdot D+m\beta = \langle D\rangle \Pi_+(D) - \langle D\rangle \Pi_-(D),
    \end{align*}
where $\Pi_{\pm}(D) = \frac12 \left( I_2\pm \frac{1}{\langle D\rangle}\big[ \alpha\cdot D + \beta\big]\right)$ are the projection operators.
Letting $\psi_{\pm}=\Pi_{\pm}(D)\psi$, \eqref{eq:csd-coulomb} is indeed diagonalized into 
\begin{align*}
    -i\partial_t \psi_{\pm} \pm \langle D\rangle \psi_{\pm}= \sum_{\theta_1,\theta_2,\theta_3\in\{\pm\}}N(\psi_{\theta_1},\psi_{\theta_2})\psi_{\theta_3},
\end{align*}
which consists of the nonlocal differential operator and cubic Hartree nonlinear term as in our main equations, \eqref{main-eq:semi}. 
Especially, the potentials in Hartree term are given as $\frac{\eta_j}{|\eta|^2}$ for $j=1,2$ in the fourier space, which has the similar singularity near the origin and decay property  with order $-1$ as the one given in  \eqref{main-eq:semi}, so \eqref{eq:csd-coulomb} can be also regarded as the scattering critical equation and modified scattering would be expected. 
However, not only long time behaviors but the global existence of solutions to \eqref{eq:csd-coulomb} are still unknown and only the local results have been intensively studied including other choices of gauge \cite{Okamoto2013,bourcanma2014-dcds,Huh2016,Pecher2016}.
One of main difficulty in studying global solutions, compared to our equation \eqref{main-eq:semi}, arises from analysis of the following various resonance functions 
\begin{align}
    p_{(\theta_1,\theta_2,\theta_3)}(\xi,\eta,\sigma)=\braxi-\theta_1\langle\xi-\eta\rangle-\theta_2\langle\eta+\sigma\rangle+\theta_3\langle\sigma\rangle, \quad \theta_i \in \{\pm\}.\label{eq:resonance-ftn}
    \end{align}
Indeed, one can see that the key null structure \eqref{Null structure} to remove  the singularity is no longer valid when $\theta_1=-$.
Nevertheless, we believe that the methodology in this paper with the help of careful analysis of resonance set together with null structures from Dirac operator will play a crucial role in studying the global behavior of solutions to \eqref{eq:csd-coulomb}.

The similar structure also can be observed in the following Dirac equation 
\begin{align}
(-i\partial_{t}+\al\cdot D+m\beta)\psi & =\lam\left(|x|^{-1}*|\psi|^{2}\right)\psi\qquad\mathrm{in}\;\;\R\times\mathbb{R}^{2},\tag{DE}\label{eq:dirac-hartree}
\end{align}
where $\psi:\R^2\rightarrow \C^2$ is the spinor.
\eqref{eq:dirac-hartree} describe the relativistic dynamics of electrons in graphene  and can be derived from the nonlinear Schr\"odinger equation with a potential which is periodic with respect to honeycomb structure \cite{arbuspar2018-jmp}.
 \eqref{eq:dirac-hartree} also can be referred to as the scattering critical equation, but the only local results were established in \cite{hajjmehats2014,lee2021-bkms}. 
 The global well-posedness and modified scattering for \eqref{eq:dirac-hartree}
will be treated in future work. Finally, we refer to \cite{CKLY2022,Cloos2020} for global results for \eqref{eq:dirac-hartree} in three dimension.

\subsection*{Notations}
\; 

\noindent $\bullet$ (Fourier transform)
$\mathcal F g(\xi)=\widehat{g}(\xi):=\int_{\R^2}e^{-ix\cdot\xi}g(x)dx$ and $g(x)=\frac{1}{(2\pi)^2}\int_{\R^2}e^{ix\cdot\xi}\widehat{g}(\xi)d\xi$.

\noindent $\bullet$
$\langle x\rangle:=(1+|x|^{2})^{\frac{1}{2}}$ for $x\in\mathbb{R}^{2}$.

\noindent $\bullet$ (Mixed-normed spaces) For a Banach space $X$
and an interval $I$, $u\in L_{I}^{q}X$ iff $u(t)\in X$ for a.e. $t\in I$
and $\|u\|_{L_{I}^{q}X}:=\|\|u(t)\|_{X}\|_{L_{I}^{q}}<\infty$. Especially,
we denote $L_{I}^{q}L_{x}^{r}=L_{t}^{q}(I;L_{x}^{r}(\mathbb{R}^2))$, $L_{I,x}^{q}=L_{I}^{q}L_{x}^{q}$,
$L_{t}^{q}L_{x}^{r}=L_{\mathbb{R}}^{q}L_{x}^{r}$.

\noindent $\bullet$ As usual, different positive constants are denoted by the same letter $C$, if not specified. $A\lesssim B$ and $A\gtrsim B$ means that
$A\le CB$ and $A\ge C^{-1}B$, respectively for some $C>0$. $A\sim B$
means that $A\lesssim B$ and $A\gtrsim B$.

\noindent $\bullet$ (Fourier multiplier) $D=-i\nabla$. For $m:\R^2\rightarrow\R$, $m(D)f:=\mathcal{F}^{-1}\big( m(\xi)\widehat{f}(\xi) \big)$.

\noindent $\bullet$ (Littlewood-Paley operators) Let $\rho$ be a
bump function such that $\rho\in C_{0}^{\infty}(B(0,2))$
with $\rho(\xi)=1$ for $|\xi|\le1$ and define $\rho_{N}(\xi):=\rho\left(\frac{\xi}{N}\right)-\rho\left(\frac{2\xi}{N}\right)$
for $N\in2^{\mathbb{Z}}$ and also $\rho_{\le N_{0}}:=1-\sum_{N>N_{0}}\rho_{N}$. We define the frequency projection
$P_{N}$ by $\mathcal{F}(P_{N}f)(\xi)=\rho_{N}(\xi)\widehat{f}(\xi)$. In addition
$P_{N_{1}\le\cdot\le N_{2}}:=\sum_{N_{1}\le N\le N_{2}}P_{N}$ and
$P_{\sim N_{0}}:=\sum_{N\sim N_{0}}P_{N}$. For $N\in2^{\mathbb{Z}}$
we denote $\widetilde{\rho_{N}}=\rho_{N/2}+\rho_{N}+\rho_{2N}$. In
particular, $\widetilde{P_{N}}P_{N}=P_{N}\widetilde{P_{N}}=P_{N}$
where $\widetilde{P_{N}}=\mathcal{F}^{-1}\widetilde{\rho_{N}}\mathcal{F}$.
Especially, we denote $P_{N}f$ by $f_{N}$ for any measurable function
$f$.

\noindent $\bullet$ Let $\textbf{v}=(v_{i}),\textbf{w}=(w_{i})\in\R^{2}$.
Then $\textbf{v}\otimes\textbf{w}$ denotes the usual tensor product
such that $(\textbf{v}\otimes\textbf{w})_{ij}=v_{i}w_{j}$ for $i,j=1,2$. We also
denote a tensor product of $\textbf{v}\in\C^{n}$ and $\textbf{w}\in\C^{m}$
by a matrix $\textbf{v}\otimes\textbf{w}=(v_{i}w_{j})_{\substack{i=1,\cdots,n\\
i=1,\cdots,m
}
}$. For simplicity, we use the simplified notation 
\[
\mathbf{v}^{k}=\overbrace{\mathbf{v}\otimes\cdots\otimes\mathbf{v}}^{k\;\text{times}},\qquad\nabla^{k}=\overbrace{\nabla\otimes\cdots\otimes\nabla}^{k\;\text{times}}.
\]
The product of $\mathbf{v}$ and $f\in\C$ is given by $\mathbf{v}f=\mathbf{v}\otimes f$.

\noindent $\bullet$ For the distinction between a vector and a scalar,
we use the bold letter for a vector-valued function and the normal
letter for a scalar-valued function.


\section{Time decay estimates}
In this section, we find sharp time decay estimates of solutions to linear equation. We define an a priori assumption incorporating time decay to get global solutions to \eqref{main-eq:semi}. 
Define 
\begin{align}
    f(t,x):=e^{it\langle D\rangle}u(t,x),\label{eq:interation}
\end{align}
where $u(t)$ is a solution to \eqref{main-eq:semi}. Let us set
\begin{align}\label{eq:bundle}
	n\ge1000,\;\; k=\frac{n}{100},\;\; \mbox{ and }\;\; \delta_0=\frac{1}{100}.
\end{align} For $\ve_{1}>0$
to be chosen later, we assume a priori smallness of solutions: for
a large time $T>0,$
\begin{align}
\|u\|_{\Sigma_{T}}\les\ve_{1},\label{assumption-apriori}
\end{align}
with 
\begin{align*}
\begin{aligned}\|u\|_{\Sigma_{T}} & :=\sup_{t\in[0,T]}\Big[\bra{t}^{-\de_{0}}\|u(t)\|_{E_{1}}+\bra{t}^{-2\de_{0}}\|u(t)\|_{E_{2}}+\|u(t)\|_{S}\Big],\end{aligned}
\end{align*}
where 
\begin{align*}
\|u(t)\|_{E_{1}} & :=\|u(t)\|_{H^{n}(\R^2)}+\|xe^{it\jp D}u(t)\|_{H^{2}(\R^2)},\\
\|u(t)\|_{E_{2}} & :=\|x^{2}e^{it\jp D}u(t)\|_{H^{2}(\R^2)},\\
\|u(t)\|_{S_{\ }} & :=\left\Vert \braxi^{k}\widehat{u}(t)\right\Vert _{L_{\xi}^{\infty}(\R^2)}.
\end{align*}
We compute the pointwise time decay of the semi-relativistic equation by assuming the a priori assumption \eqref{assumption-apriori}. We refer to \cite[Proposition~3.1]{pusa} for three dimensional case where the sharp time decay $|t|^{-\frac32}$ was obtained. Here, we obtain sharp time decay estimates for two dimensional case by following the similar strategy in \cite[Proposition~3.1]{pusa}, 
but we have to bound the linear solution with the differential order up to $k$.
 \begin{prop}[Time decay]\label{timedecay-prop}
Assume that $u$ satisfies the a priori assumption \eqref{assumption-apriori} for $\ve_{1}$ and $T$ with the index conditions \eqref{eq:bundle}.  Then for small $\ve_{1}$, There exists $C$
satisfying that for $0\le t\le T$ and $0\le\ell\le k$ 
\begin{align}
\|u(t)\|_{W^{\ell,\infty}}\le C\langle t\rangle^{-1}\ve_{1},\label{eq:time-decay}
\end{align}
where the index $k$ is in the a priori assumption \eqref{assumption-apriori}.
\end{prop}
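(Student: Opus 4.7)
The plan is to prove the pointwise decay by a direct stationary-phase analysis on the Fourier representation
\[
\partial_{x}^{\al}u(t,x)=\frac{1}{(2\pi)^{2}}\int_{\R^{2}}(i\xi)^{\al}e^{i(x\cdot\xi-t\braxi)}\wh f(t,\xi)\,d\xi,\qquad |\al|\le\ell\le k,
\]
where $f=e^{it\brad}u$. The a priori hypothesis \eqref{assumption-apriori} supplies exactly the three ingredients needed on $\wh{f}$: the scattering norm gives $\|\braxi^{k}\wh f(t)\|_{L_{\xi}^{\infty}}\les\ve_{1}$, while the weighted norms $\|u(t)\|_{E_{1}}\les\brat^{\de_{0}}\ve_{1}$ and $\|u(t)\|_{E_{2}}\les\brat^{2\de_{0}}\ve_{1}$ translate (since $|\wh u|=|\wh f|$) into $L^{2}$-control of $\nabla_{\xi}\wh f$ and $\nabla_{\xi}^{2}\wh f$ with only small polynomial growth in $t$.

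For $t\les 1$ the bound is immediate from Sobolev embedding, so I focus on $t\ge 1$ and decompose $\partial^{\al}u=\sum_{N\in 2^{\Z}}P_{N}\partial^{\al}u$ dyadically. For the very high frequencies $N\gtrsim\brat^{1/n}$, Bernstein together with the $H^{n}$ part of the $E_{1}$ norm yields $\|P_{N}\partial^{\al}u\|_{L^{\infty}}\les N^{|\al|+1-n}\brat^{\de_{0}}\ve_{1}$, and summing in $N$ produces a contribution far better than $\brat^{-1}$ thanks to the gap $n\gg k$. For the intermediate frequencies $\brat^{-1/2}\les N\les\brat^{1/n}$, I apply stationary phase to $\Phi(\xi)=x\cdot\xi-t\braxi$, which has a unique critical point $\xi_{c}=x/\sqrt{t^{2}-|x|^{2}}$ whenever $|x|<t$. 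In the non-stationary regions $|x|\ge 2t$ and $|x|\le t/2$, repeated integration by parts along the vector field $L=\frac{x-t\xi/\braxi}{|x-t\xi/\braxi|^{2}}\cdot\nabla_{\xi}$ yields arbitrary powers of $\brat^{-1}$, with boundary and commutator terms involving only $\nabla_{\xi}^{j}\wh f$ for $j\le 2$, controlled via Sobolev embedding in $\xi$ on the compact dyadic annulus by the $E_{1}, E_{2}$ norms. In the stationary region $t/2\le|x|\le 2t$, the Hessian $\nabla^{2}\Phi(\xi_{c})=-t\,\nabla^{2}\braxi|_{\xi_{c}}$ is nondegenerate with $|\det\nabla^{2}\Phi(\xi_{c})|\sim t^{2}\braxi_{c}^{-4}$, so the two-dimensional stationary-phase formula extracts the sharp $t^{-1}$, with main term bounded by $\|\braxi^{k}\wh f\|_{L^{\infty}}\ve_{1}$ and remainder $\les\brat^{-2+2\de_{0}}\ve_{1}\ll\brat^{-1}\ve_{1}$. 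At the very low frequencies $N\le\brat^{-1/2}$, where stationary phase loses efficiency, I simply use the volume bound $|P_{N}\partial^{\al}u|\les N^{|\al|+2}\|\wh f\|_{L^{\infty}}$ and sum geometrically.

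The main obstacle will be the careful book-keeping at the boundary of these regimes: the slight growth $\brat^{2\de_{0}}$ inherited from the $E_{2}$ norm must be absorbed by the additional $\brat^{-1}$ factors gained through integration by parts, which is precisely why $\de_{0}=\frac{1}{100}$ is chosen small and why both weighted norms, rather than $E_{1}$ alone, appear in the a priori assumption. After summing the three regimes over the $O(\log\brat)$ dyadic scales, the index relations \eqref{eq:bundle} then yield the stated bound $\|u(t)\|_{W^{\ell,\infty}}\les\brat^{-1}\ve_{1}$ uniformly in $\ell\le k$.
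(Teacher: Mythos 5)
Your overall strategy — writing $\partial^{\alpha}u$ as an oscillatory integral, splitting dyadically into a low range $N\le\brat^{-1/2}$ handled by a measure bound, a high range handled by Bernstein together with $H^{n}$ control, and a middle range attacked by stationary/non-stationary phase — matches the paper's proof. However, your case analysis in the middle range contains a genuine gap: you declare $|x|\le t/2$ to be ``non-stationary'' and propose to integrate by parts along $L=\frac{x-t\xi/\braxi}{|x-t\xi/\braxi|^{2}}\cdot\nabla_{\xi}$ there, but the critical point $\xi_{c}=x/\sqrt{t^{2}-|x|^{2}}$ exists for \emph{every} $|x|<t$, and as $|x|$ ranges over $(0,t/2]$ it ranges over $(0,1/\sqrt{3}]$. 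Thus for dyadic scales $N\in[\brat^{-1/2},1]$ the annulus $|\xi|\sim N$ can contain $\xi_{c}$, $\nabla_{\xi}\Phi$ vanishes there, and your vector field is singular. The partition that actually works (and that the paper uses) is $|x|>t$ versus $|x|<t$, and inside $|x|<t$ one compares $N$ against $N_{0}\sim|\xi_{c}|$; the usable non-stationary lower bound is $|\nabla_{\xi}\Phi|\gtrsim|\xi-\xi_{0}|/\bra{\max(N,N_{0})}^{3}$, not $|\nabla_{\xi}\Phi|\gtrsim t$, which is needed to make the powers of $N$ close under the dyadic sum.

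A second, softer issue is your appeal to the two-dimensional stationary-phase asymptotic with leading term $\sim t^{-1}a(\xi_{c})|\det\nabla^{2}\Phi(\xi_{c})|^{-1/2}$ and remainder $\les\brat^{-2+2\de_{0}}$. The classical error bound there requires pointwise $C^{2}$ (in fact typically higher) control of the amplitude, while your hypotheses only control $\nabla_{\xi}^{2}\widehat{f}$ in $L^{2}$ through the $E_{2}$ norm. The paper sidesteps this entirely: it decomposes dyadically in $|\xi-\xi_{0}|\sim L$ around the critical point, bounds the innermost piece $L\sim L_{0}\sim t^{-1/2}$ by the crude measure estimate $L_{0}^{2}\|\braxi^{k}\widehat{f}\|_{L^{\infty}}\sim t^{-1}\ve_{1}$ (no asymptotic expansion needed), and integrates by parts twice on each outer annulus $L>L_{0}$, landing on $\nabla_{\xi}^{2}\widehat{f}$ in $L^{1}$ which is handled via H\"older and the weighted $L^{2}$ bounds. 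You would need either to adopt this organization, or to justify a stationary-phase remainder estimate adapted to $W^{2,2}$ rather than $C^{2}$ amplitudes.
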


\begin{proof}
We prove that 
\begin{align*}
  \| \bra{D}^{\ell} e^{-it\langle D\rangle} f \|_{L^{\infty}} \lesssim 1,
\end{align*}    
whenever $f$ satisfies
\begin{align}
    \langle t\rangle^{-1}\|\braxi^{k}\widehat{f}\|_{L_{\xi}^{\infty}}+\langle t\rangle^{-1-\de_{0}}\Big[\|xf\|_{L_{x}^{2}}+\|f\|_{H^{n}}\Big]+\bra{t}^{-1-2\de_{0}}\|x^{2}f\|_{L_{x}^{2}}\le\ve_{1}.\label{f-condition}
\end{align}
We begin with writing     
\[
\bra{D}^{\ell}e^{-it\langle D\rangle} f (t,x)=\sum_{N\in2^{\mathbb{Z}}}I_N(t,x)
\]
where 
\begin{align}
  I_N(t,x) &:= \frac{1}{(2\pi)^2}\int_{\mathbb{R}^{2}}\bra{\xi}^{\ell}e^{it\phi(\xi)}\widehat{f}(\xi)\rho_{N}(\xi)d\xi, \nonumber \\ 
    \phi(\xi) &:= -\braxi+\xi\cdot\frac{x}{t}.\label{theta-phase}
\end{align}
We decompose 
\[
\sum_{N\in2^{\mathbb{Z}}}I_{N}(t,x)=\left(\sum_{N\le\langle t\rangle^{-\frac{1}{2}}}+\sum_{N\ge\langle t\rangle^{\frac{2}{n}}}+\sum_{\langle t\rangle^{-\frac{1}{2}}\le N\le\langle t\rangle^{\frac{2}{n}}}\right)I_{N}(t,x).
\]
The low-frequency part can be estimated as 
\[
\sum_{N\le\langle t\rangle^{-\frac{1}{2}}}I_{N}(t,x)\les\sum_{N\le\langle t\rangle^{-\frac{1}{2}}}\|\rho_{N}\|_{L^{1}}\normo{\bra{\xi}^{\ell}\widehat{f}}_{L_{\xi}^{\infty}}\les\langle t\rangle^{-1}\|\bra{\xi}^{\ell}\widehat{f}\|_{L_{\xi}^{\infty}}\les\ve_{1}.
\]
For the high-frequency, we exploit the high Sobolev norm bound as follows:
\begin{align*}
\sum_{N\ge\langle t\rangle^{\frac{2}{n}}}I_{N}(t,x) & \les\sum_{N\ge\langle t\rangle^{\frac{2}{n}}}\bra{N}^{\ell+1}\|\rho_{N}\widehat{f}\|_{L_{x}^{2}} \les\sum_{N\ge\langle t\rangle^{\frac{2}{n}}}N^{\ell+1-n}\|f\|_{H^{n}}\\
 &  \les\langle t\rangle^{-1-\de_{0}}\|f\|_{H^{n}}\les\ve_{1}. 
\end{align*}

For the remaining mid-frequency part, we apply the non-stationary phase method. 
One verifies that when $|x|>t$, the phase $\phi$ is non-stationary, i.e. 
\begin{align}
|\nabla_{\xi}\phi(\xi)|\ge \left|\frac{|x|}{t}-\frac{|\xi|}{\braxi}\right|\ge 1-\frac{|\xi|}{\braxi}\gtrsim\braxi^{-2}.\label{non-stat-1}
\end{align}
On the other hand, when $|x|<t$, the phase $\phi$ could be stationary around $\xi_{0}$:
\[
\nabla_{\xi}\phi(\xi_{0})=0\;\;\mbox{where}\;\;\xi_{0}=-\frac{x}{\sqrt{t^{2}-|x|^{2}}}.
\]
We now set $N_0\sim |\xi_{0}|$. 
First, we consider the non-stationary case $N\nsim N_{0}$. Then one can find that  
\begin{align}
\Big|\nabla_{\xi}\phi(\xi)\Big|\gtrsim\max\left(\frac{|\xi-\xi_{0}|}{\bra{N}^{3}},\frac{|\xi-\xi_{0}|}{\bra{N_{0}}^{3}}\right), \text{ for } |\xi|\sim N. \label{non-stat-2}
\end{align}
We perform an integration by parts twice to write $I_N$ as 
\[
\int_{\mathbb{R}^{2}}\bra{\xi}^{\ell}e^{it\phi(\xi)}\widehat{f}(\xi)\rho_{N}(\xi)d\xi=I_{N}^{1}(t,x)+I_{N}^{2}(t,x)+I_{N}^{3}(t,x)
\]
where 
\begin{align}
\begin{aligned}I_{N}^{1}(t,x) & =-t^{-2}\int_{\mathbb{R}^{2}}\bra{\xi}^{\ell}e^{it\phi(\xi)}\frac{\nabla_{\xi}\phi}{|\nabla_{\xi}\phi|^{4}}\cdot\nabla_{\xi}\phi\nabla_{\xi}^{2}\left(\widehat{f_{N}}(\xi)\right)d\xi,\\
I_{N}^{2}(t,x) & =-2t^{-2}\int_{\mathbb{R}^{2}}\bra{\xi}^{\ell}e^{it\phi(\xi)}\nabla_{\xi}\cdot\left(\frac{\nabla_{\xi}\phi}{|\nabla_{\xi}\phi|^{2}}\right)\frac{\nabla_{\xi}\phi}{|\nabla_{\xi}\phi|^{2}}\cdot\nabla_{\xi}\widehat{f_{N}}(\xi)d\xi\\
I_{N}^{3}(t,x) & =-t^{-2}\int_{\mathbb{R}^{2}}\bra{\xi}^{\ell}e^{it\phi(\xi)}\nabla_{\xi}\cdot\left[\nabla_{\xi}\cdot\left(\frac{\nabla_{\xi}\phi}{|\nabla_{\xi}\phi|^{2}}\right)\frac{\nabla_{\xi}\phi}{|\nabla_{\xi}\phi|^{2}}\right]\widehat{f_{N}}(\xi)d\xi.
\end{aligned}
\end{align}
By \eqref{non-stat-1} and \eqref{non-stat-2}, we obtain the following bounds (independent
of $N_{0}$): for $|\xi|\sim N$
\begin{align}
\begin{aligned}
\left|\frac{1}{\nabla_{\xi}\phi(\xi)}\right| & \les N^{-1}\bra{N}^{3},\\
\left|\nabla_{\xi}\cdot\left(\frac{\nabla_{\xi}\phi}{|\nabla_{\xi}\phi|^{2}}\right)\right| & \les N^{-2}\bra{N}^{5},\\
\left|\nabla_{\xi}\cdot\left[\nabla_{\xi}\cdot\left(\frac{\nabla_{\xi}\phi}{|\nabla_{\xi}\phi|^{2}}\right)\frac{\nabla_{\xi}\phi}{|\nabla_{\xi}\phi|^{2}}\right]\right| & \les N^{-4}\bra{N}^{10}.
\end{aligned}
\label{bound-phase}
\end{align}
Using \eqref{bound-phase} and the Sobolev embedding, we see that 
\begin{align}
    \begin{aligned}
    \label{computation}
\Big|I_{N}^{1}(t,x)\Big| & \les t^{-2}N^{-2}\bra{N}^{\ell+6}\left\Vert \nabla_{\xi}^{2}\left(\widehat{f_{N}}\;\right)\right\Vert _{L_{\xi}^{1}}\\
 & \les t^{-2}\bra{N}^{\ell+6}\Big(N^{-1}\|x^{2}f\|_{L^{2}}+N^{-2}\left\Vert \nabla\rho_N\right\Vert _{L^{\frac{4}{3}}}\|\langle x\rangle^{2}f\|_{L^{2}} \\ 
 &\qquad\qquad \qquad\qquad +N^{-2}\min\big( \langle N\rangle^{-n}N^{-1}\| f\|_{H^n}, \langle N\rangle^{-k}\|\wh{f}\|_{L_{\xi}^{\infty}}\big)\Big)\\
 & \les t^{-2}\bra{N}^{\ell+6}\left((N^{-1}+N^{-\frac{3}{2}})t^{1+2\de_{0}}+N^{-2}\min\big(\bra{N}^{-n+1} t^{1+\delta_0},t\big)\right),
    \end{aligned}\end{align}
which implies that 
\[
\sum_{\langle t\rangle^{-\frac{1}{2}}\le N\le\langle t\rangle^{\frac{2}{n}}}\Big|I_{N}^{1}(t,x)\Big|\les\ve_{1}.
\]
$I_{N}^{2}$ can be estimated similarly. Indeed, one has 
\begin{align*}
&|I_{N}^{2}(t,x)|  \les t^{-2}N^{-3}\bra{N}^{\ell+8}\normo{\nabla_{\xi}\left(\rho_{N}\widehat{f}\;\right)}_{L_{\xi}^{1}}\\
&\; \les t^{-2}\bra{N}^{\ell+8}\left(N^{-2}\left\Vert \nabla\rho_N\right\Vert _{L^{\frac{4}{3}}}\|\langle x\rangle^{2}f\|_{L^{2}}+N^{-2}\min\big( \langle N\rangle^{-n}N^{-1}\| f\|_{H^n}, \langle N\rangle^{-k}\|\wh{f}\|_{L_{\xi}^{\infty}}\big)\right),
\end{align*}
which leads us that 
\[
\sum_{\langle t\rangle^{-\frac{1}{2}}\le N\le\langle t\rangle^{\frac{2}{n}}}|I_{N}^{2}(t,x)|\les\ve_{1}.
\]
Lastly, we estimate 
\begin{align*}
    \sum_{\langle t\rangle^{-\frac{1}{2}}\le N\le\langle t\rangle^{\frac{2}{n}}} |I_N^3(t,x)| &\lesssim \sum_{\langle t\rangle^{-\frac{1}{2}}\le N\le\langle t\rangle^{\frac{2}{n}}} t^{-2} N^{-4}\langle N\rangle^{\ell+10} \|\rho_N\widehat{f} \|_{L_{\xi}^1} \\ 
    &\lesssim \sum_{\langle t\rangle^{-\frac{1}{2}}\le N\le\langle t\rangle^{\frac{2}{n}}}t^{-2}\langle N\rangle^{\ell+10}N^{-2}\min\big( \langle N\rangle^{-n}N^{-1}\| f\|_{H^n}, \langle N\rangle^{-k}\|\wh{f}\|_{L_{\xi}^{\infty}}\big)  \\ 
    &\lesssim \ep_1.
\end{align*}

We remain to consider the stationary phase case $N\sim N_{0}$. We
further decompose dyadically the frequency space around $\xi_0$. Let $L_{0}\in2^{\mathbb{Z}}$
such that $\frac{L_{0}}{2}<t^{-\frac{1}{2}}\le L_{0}$. We write 
\[
\left|\int_{\mathbb{R}^{2}}\bra{\xi}^{\ell}e^{it\phi(\xi)}\rho_{N}(\xi)\widehat{f}(\xi)d\xi\right|\le\sum_{L=L_{0}}^{2^{10}N}|J_{L}|
\]
where 
\begin{align*}
J_{L}(t,x)=\left\{ \begin{aligned} & \int_{\mathbb{R}^{2}}\bra{\xi}^{\ell}e^{it\phi(\xi)}\rho_{\le L_{0}}(\xi-\xi_{0})\rho_{N}(\xi)\widehat{f}(\xi)d\xi\qquad\mbox{when }L=L_{0},\\
 & \int_{\mathbb{R}^{2}}\bra{\xi}^{\ell}e^{it\phi(\xi)}\rho_{L}(\xi-\xi_{0})\rho_{N}(\xi)\widehat{f}(\xi)d\xi\quad\qquad\mbox{when }L>L_{0}.
\end{aligned}
\right.
\end{align*}
We bound $J_{L_0}$ the support of which contains the stationary point by just the measure 
\[
|J_{L_{0}}|\les L_{0}^{2}\bra{N}^{l}\|\rho_{N}\widehat{f}\|_{L_{\xi}^{\infty}}\les t^{-1}\|\langle \xi\rangle^k\widehat{f}\|_{L_{\xi}^{\infty}}\les\ve_{1}.
\]
When $L>L_{0}$, we return to the non-stationary phase cases. By
integrating by parts twice, we decompose $J_{L}(t,x)$ into  
\[
J_{L}(t,x)=J_{L}^{1}(t,x)+J_{L}^{2}(t,x)+J_{L}^{3}(t,x),
\]
where 
\begin{align*}
\begin{aligned}J_{L}^{1}(t,x) & =-t^{-2}\int_{\mathbb{R}^{2}}\bra{\xi}^{l}e^{it\phi(\xi)}\frac{\nabla_{\xi}\phi}{|\nabla_{\xi}\phi|^{4}}\cdot\nabla_{\xi}\phi\nabla_{\xi}^{2}\left(\widehat{f}(\xi)\rho_{N}(\xi)\rho_{L}(\xi-\xi_{0})\right)d\xi,\\
J_{L}^{2}(t,x) & =-2t^{-2}\int_{\mathbb{R}^{2}}\bra{\xi}^{l}e^{it\phi(\xi)}\nabla_{\xi}\cdot\left(\frac{\nabla_{\xi}\phi}{|\nabla_{\xi}\phi|^{2}}\right)\frac{\nabla_{\xi}\phi}{|\nabla_{\xi}\phi|^{2}}\cdot\nabla_{\xi}\left(\widehat{f}(\xi)\rho_{N}(\xi)\rho_{L}(\xi-\xi_{0})\right)d\xi,\\
J_{L}^{3}(t,x) & =-t^{-2}\int_{\mathbb{R}^{2}}\bra{\xi}^{l}e^{it\phi(\xi)}\nabla_{\xi}\cdot\left[\nabla_{\xi}\cdot\left(\frac{\nabla_{\xi}\phi}{|\nabla_{\xi}\phi|^{2}}\right)\frac{\nabla_{\xi}\phi}{|\nabla_{\xi}\phi|^{2}}\right]\widehat{f}(\xi)\rho_{N}(\xi)\rho_{L}(\xi-\xi_{0})d\xi.
\end{aligned}
\end{align*}
We estimate $J_{L}^{i}$ for $i=1,2,3$ similarly to the above, but the following bounds are employed instead of \eqref{bound-phase}: for $|\xi|\sim N$ and $|\xi-\xi_0|\sim L$,  
\begin{align*}
\begin{aligned}\left|\nabla_{\xi}\phi(\xi)\right|^{-1} & \les L^{-1}\bra{N}^{3},\\
\left|\nabla_{\xi}\cdot\left(\frac{\nabla_{\xi}\phi}{|\nabla_{\xi}\phi|^{2}}\right)\right| & \les L^{-2}\bra{N}^{5},\\
\left|\nabla_{\xi}\cdot\left[\nabla_{\xi}\cdot\left(\frac{\nabla_{\xi}\phi}{|\nabla_{\xi}\phi|^{2}}\right)\frac{\nabla_{\xi}\phi}{|\nabla_{\xi}\phi|^{2}}\right]\right| & \les L^{-4}\bra{N}^{10}.
\end{aligned}
\end{align*}
The computation as in \eqref{computation} gives the desired results.
We omit the details and complete the proof of \eqref{eq:time-decay}
\end{proof}

Since we have to handle the multipliers to exploit the time decay
\eqref{eq:time-decay} in our main proof, we introduce some useful
estimates in the rest of this section. 
\begin{lemma}[Coifman-Meyer operator estimates]\label{lem:coif}
Assume that a multiplier $\textbf{m}(\xi,\eta)$ satisfies that 
\begin{align}\label{multiplier bound}
    C_{\mathbf{m}}:=\left\Vert \iint_{\mathbb{R}^{2}\times\R^{2}}\mathbf{m}(\xi,\eta)e^{ix\cdot\xi}e^{iy\cdot\eta}\,d\eta d\xi\right\Vert _{L_{x,y}^{1}(\mathbb{R}^{2}\times\R^{2})}<\infty.    
\end{align}
Then, for $\frac{1}{p}+\frac{1}{q}=\frac{1}{2}$, 
\begin{align}\label{eq:coif-1}
\left\Vert \int_{\mathbb{R}^{2}}\mathbf{m}(\xi,\eta)\widehat{u}(\xi\pm\eta)\widehat{v}(\eta)\,d\eta\right\Vert _{L_{\xi}^{2}(\R^2)}\les C_{\mathbf{m}}\|u\|_{L^{p}}\|v\|_{L^{q}},
\end{align}
and for $\frac{1}{p}+\frac{1}{q}+\frac{1}{r}=1$, 
\begin{align}\label{eq:coif-1-1}
\left|\iint_{\mathbb{R}^{2}\times\R^{2}}\mathbf{m}(\eta,\sigma)\widehat{u}(\eta\pm\sigma)\widehat{v}(\eta)\wh{w}(\sigma)\,d\sigma d\eta\right|\les C_{\mathbf{m}}\|u\|_{L^{p}}\|v\|_{L^{q}}\|w\|_{L^{r}}.
\end{align}
\end{lemma}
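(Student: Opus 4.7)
The plan is to exploit the assumption \eqref{multiplier bound} by writing the multiplier itself as an inverse Fourier transform of an $L^{1}$ kernel and then passing this representation through the bi/trilinear integrals by Fubini, at which point a pointwise physical-side representation emerges and the estimate reduces to H\"older's inequality. Introduce
\begin{align*}
K(x,y):=\iint_{\R^{2}\times\R^{2}}\mathbf m(\xi,\eta)\,e^{ix\cdot\xi}e^{iy\cdot\eta}\,d\eta\, d\xi,
\end{align*}
so that by hypothesis $\|K\|_{L^{1}_{x,y}}=C_{\mathbf m}$ and, by Fourier inversion, $\mathbf m(\xi,\eta)$ is a constant multiple of $\iint K(x,y)\,e^{-ix\cdot\xi}e^{-iy\cdot\eta}\,dxdy$.

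To prove \eqref{eq:coif-1}, I substitute this representation for $\mathbf m$ into $T(u,v)(\xi):=\int \mathbf m(\xi,\eta)\wh u(\xi\mp\eta)\wh v(\eta)d\eta$ and swap the order of integration. Using the translation identity $e^{-iy\cdot\eta}\wh v(\eta)=\wh{v(\cdot-y)}(\eta)$ together with the convolution theorem, the inner $\eta$-integral equals a constant multiple of $\wh{\,u(\cdot)\,v(\cdot\mp y)\,}(\xi)$, and the prefactor $e^{-ix\cdot\xi}$ translates the product by $x$ on the physical side. Thus $T(u,v)$ admits, up to an absolute constant, the pointwise representation
\begin{align*}
\mathcal F^{-1}[T(u,v)](z)=\iint_{\R^{2}\times\R^{2}} K(x,y)\,u(z-x)\,v(z-x\mp y)\,dx\,dy.
\end{align*}
Plancherel, Minkowski's inequality in $(x,y)$, and H\"older with $\tfrac1p+\tfrac1q=\tfrac12$ in $z$ then yield
\begin{align*}
\|T(u,v)\|_{L^{2}_{\xi}}\les \iint |K(x,y)|\,\|u\|_{L^{p}}\|v\|_{L^{q}}\,dx\,dy=C_{\mathbf m}\|u\|_{L^{p}}\|v\|_{L^{q}}.
\end{align*}
The $\xi+\eta$ case is reduced to the $\xi-\eta$ case by the reflection $v(x)\mapsto v(-x)$, which preserves $\|v\|_{L^{q}}$.

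For the trilinear estimate \eqref{eq:coif-1-1}, the same $K$-substitution and Fubini move reduce the integral (after absorbing the two exponentials $e^{-ix\cdot\eta}$, $e^{-iy\cdot\sigma}$ as translations in physical space, via the convolution theorem as above) to a constant multiple of
\begin{align*}
\iint_{\R^{2}\times\R^{2}} K(x,y)\int_{\R^{2}} v(z-x)\,u(z-x\mp y)\,w(z-y)\,dz\,dx\,dy.
\end{align*}
H\"older in $z$ with $\tfrac1p+\tfrac1q+\tfrac1r=1$ bounds the inner integral uniformly in $(x,y)$ by $\|u\|_{L^{p}}\|v\|_{L^{q}}\|w\|_{L^{r}}$, and integrating against $|K|$ yields the claimed bound $C_{\mathbf m}\|u\|_{L^{p}}\|v\|_{L^{q}}\|w\|_{L^{r}}$.

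The only real obstacle is bookkeeping the signs appearing in $\xi\pm\eta$ (resp.\ $\eta\pm\sigma$): a sign flip is absorbed by a reflection $v(x)\mapsto v(-x)$ which leaves the relevant $L^{q}$ norms invariant, so it suffices to treat one case carefully. Beyond that the argument is the classical Coifman--Meyer scheme adapted to the present weighted $L^{1}$ hypothesis on the multiplier, and no resonance-type cancellation needs to be invoked.
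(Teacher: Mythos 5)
Your proof is correct. The paper does not include a proof of this lemma---it is stated without argument as a known ``Coifman--Meyer'' type estimate---so there is no in-paper proof to compare against. Your argument is the standard, self-contained one for such kernel-condition multiplier bounds: write $\mathbf{m}$ as the Fourier transform of the $L^1$ kernel $K$, Fubini past the bilinear/trilinear integrals, recognize the resulting inner integrals as Fourier transforms of products of translates (and, for the $+$ sign, reflections) of $u,v,w$, and then apply Plancherel, Minkowski in $(x,y)$, and H\"older in the physical variable $z$. The scheme is sound; the only caveats, both minor, are that Fourier inversion ($\mathbf{m}=c\,\widehat{K}$) needs $\mathbf{m}$ to be at least a tempered distribution for which inversion applies (automatic here since $K\in L^1$ forces $\mathbf{m}$ to agree a.e.\ with a bounded continuous function), and that the precise placement of translates/reflections in your displayed trilinear formula is off by a change of variables from the literal computation---but since $L^p$ norms are invariant under translation and reflection, this bookkeeping does not affect the bound, and you correctly flag it. No gap.
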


\section{\label{sec:Weighted-Energy-estimate}Weighted Energy estimate}
In this section, we prove the energy estimate
which plays an important role in the bootstrap argument. In the following Proposition \ref{prop-energy}, we bound the weighted norms $\|u\|_{E_{1}},\|u\|_{E_{2}}$ in the a priori assumption \eqref{assumption-apriori}.

\begin{prop}[Weighted energy estimate]\label{prop-energy} Assume
that $u\in C([0,T],H^{n})$ satisfies the a priori assumption \eqref{assumption-apriori}
for some $\ve_{1}>0$ with initial data condition \eqref{condition-initial:semi}
for $\ve_{0}>0$. Suppose the index conditions \eqref{eq:bundle}. Then, we have the following estimates
\begin{align}
 & \sup_{t\in[0,T]}\langle t\rangle^{-\de_{0}}\|u(t)\|_{E_{1}}\le\ve_{0}+C\ve_{1}^{3},\label{eq:first-moment}\\
 & \sup_{t\in[0,T]}\langle t\rangle^{-2\de_{0}}\|u(t)\|_{E_{2}}\le\ve_{0}+C\ve_{1}^{3},\label{eq:second-moment}
\end{align}
with $\delta_0=\frac{1}{100}$.
\end{prop}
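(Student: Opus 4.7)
The plan is to work in Fourier space with the profile $\widehat f(t,\xi)$ from \eqref{eq:duhamel}. Since $\mathcal{F}(xf) = i\nabla_\xi\widehat f$, the three quantities to control reduce to $\|\widehat f\|_{H^n}$, $\|\jp{\xi}^{2}\nabla_\xi\widehat f\|_{L^2_\xi}$ and $\|\jp{\xi}^{2}\nabla_\xi^2\widehat f\|_{L^2_\xi}$, and I estimate them after differentiating \eqref{eq:duhamel} in $\xi$. Each such derivative either falls on the phase $e^{is\phi(\xi,\eta)}$, producing a time-growth factor $is\nabla_\xi\phi$, or on the amplitude $\widehat f(s,\xi-\eta)$; the singular factor $|\eta|^{-1}$ is always present and must be tamed either by the null structure \eqref{Null structure} or, in high-frequency regions, by a Littlewood-Paley decomposition combined with the Coifman-Meyer estimate (Lemma \ref{lem:coif}).

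For the $H^n$ part of $\|u\|_{E_1}$ I would run a standard $H^n$ energy estimate directly on \eqref{main-eq:semi}: since $\sqrt{1-\De}$ is skew-adjoint, $\tfrac{d}{dt}\|u\|_{H^n}^2$ equals $2\lam\,\Im\langle \jp{D}^{2n}[(|x|^{-1}*|u|^2)u],u\rangle$, and distributing derivatives by the fractional Leibniz rule, placing two factors of $u$ in $L^\infty$ via \eqref{eq:time-decay}, and controlling the Coulomb convolution by Hardy-Littlewood-Sobolev yields $\tfrac{d}{dt}\|u\|_{H^n}^2\les \ve_1^2\jp{s}^{-2}\|u\|_{H^n}^2$, which closes without time growth by Gr\"onwall. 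For $\|xf(t)\|_{H^2}$ I would apply $\nabla_\xi$ to \eqref{eq:duhamel} and split the two resulting integrals: on the amplitude-derivative piece, Plancherel together with the $E_1$ a priori bound and $\|u\|_{L^\infty}\les\ve_1\jp{s}^{-1}$ close the estimate; on the phase-derivative piece I use the null cancellation $|\nabla_\xi\phi|\les|\eta|$ so that the multiplier $s|\nabla_\xi\phi||\eta|^{-1}\les s$ is absorbed by the $\jp{s}^{-2}$ decay of the cubic nonlinearity, while the large-$|\eta|$ contribution is dyadically decomposed and controlled via Lemma \ref{lem:coif}. Integration in $s$ yields at most a $\jp{t}^{\de_0}$ loss, consistent with \eqref{eq:first-moment}.

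The main obstacle is \eqref{eq:second-moment}, and within it the term \eqref{delicate term} in which both $\xi$-derivatives fall on the phase, producing a symbol $s^2(\nabla_\xi\phi)^2|\eta|^{-1}$. The null cancellation only reduces this to $\sim s^2|\eta|$ in the low-frequency regime, and in two dimensions the cubic nonlinearity decays merely as $\jp{s}^{-2}$, so the naive estimate gives a constant integrand in $s$ and a linearly divergent time integral; I must extract an additional $s^{-1+O(\de_0)}$ factor. Following the heuristic \eqref{HD}, I would expand
\begin{align*}
\widehat{|u|^2}(\eta) = \frac{1}{(2\pi)^2}\int_{\R^2} e^{is(\jp{\sigma+\eta}-\jp{\sigma})}\widehat f(s,\sigma)\overline{\widehat f(s,\sigma+\eta)}\,d\sigma,
\end{align*}
and split the $\eta$-integration at a scale $|\eta|\sim s^{-1+c\de_0}$ for a small $c>0$. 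On the small-$|\eta|$ region, Minkowski together with the $L^\infty$ bound $\|\widehat{|u|^2}\|_{L^\infty}\le\|u\|_{L^2}^2\les\ve_1^2$ and the two-dimensional integral $\int_{|\eta|\le R}|\eta|\,d\eta\sim R^3$ yield an integrand of size $\ve_1^3 s^{-1+3c\de_0}$, which integrates up to the level of $\jp{t}^{2\de_0}$ when $c$ is small enough. On the complementary region $|\eta|\gtrsim s^{-1+c\de_0}$, I integrate by parts in $\sigma$ using the space-resonance inequality $|\nabla_\sigma(\jp{\sigma+\eta}-\jp{\sigma})|\gtrsim|\eta|$ valid for $|\sigma|,|\eta|\les 1$ to trade one power of $s$ for $|\eta|^{-1}$; the resulting symbol is controlled dyadically by Lemma \ref{lem:coif}, with the $\nabla_\sigma\widehat f$ boundary pieces absorbed into the $E_1$ norm. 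The remaining contributions to $\nabla_\xi^2\widehat f$, in which one or no derivative hits the phase, are strictly easier variants of the $\|xf\|_{H^2}$ analysis. Collecting all bounds together with the initial data estimate \eqref{condition-initial:semi} produces \eqref{eq:first-moment} and \eqref{eq:second-moment}, closing the bootstrap at the level of the weighted energies.
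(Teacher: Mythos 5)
Your proposal follows essentially the same road map as the paper's proof: Duhamel on the profile $\widehat f$, split according to where the $\xi$-derivatives land, invoke the null cancellation $\nabla_\xi\phi|_{\eta=0}=0$ to tame $|\eta|^{-1}$, and then on the delicate term $\mathcal J^4$ split the $\eta$-integral at a time-dependent scale, trading $s^{-1}|\eta|^{-1}$ for one power of $s$ by integrating by parts in $\sigma$. The paper organizes this same argument differently: it proves the space-resonance gain once and for all as a standalone estimate on the quadratic term, namely $\|P_N|u(t)|^2\|_{L^2}\les \bra{t}^{-2+\frac32\delta_0}N^{-1}\bra{N}^{-1}\ve^2$ (Lemma 3.5, inequality \eqref{ineq:PLu2}), and then runs a uniform Littlewood--Paley/Coifman--Meyer scheme over all frequency pieces $(N_0,N_1,N_2)$, so no explicit threshold like $s^{-1+c\delta_0}$ ever appears. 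Your inline version is equivalent in spirit and would close with an appropriate choice of $c$.

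There are two technical points you should repair before the estimate actually closes. First, you state the space-resonance lower bound $|\nabla_\sigma(\jp{\sigma+\eta}-\jp{\sigma})|\gtrsim|\eta|$ only for $|\sigma|,|\eta|\les1$; in $\mathcal J^4$ the inner frequencies $\sigma$ and $\sigma+\eta$ range over all of $\R^2$, and in general the bound degrades to $|\eta|/\bigl(\max(\jp{\sigma+\eta},\jp\sigma)\min(\jp{\sigma+\eta},\jp\sigma)^2\bigr)$. The extra polynomial losses must be absorbed by the $\bra{\cdot}^{-k}$ decay from the scattering norm, which is exactly why the paper dyadically decomposes the inner frequencies as well. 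Second, the null-structure bound you use, $|\nabla_\xi\phi|^2|\eta|^{-1}\les|\eta|$, is only the low-frequency version; for $|\xi|$ large it improves to $|\eta|/\max(\jp\xi,\jp{\xi-\eta})^2$, and this improvement is needed to absorb the $\jp\xi^2$ weight that sits in front of $\mathcal J^4$. Without it, the Minkowski bound on the small-$\eta$ region does not control $\|\jp\xi^2\mathcal J^4\|_{L^2_\xi}$. Finally, your Gr\"onwall claim $\tfrac d{dt}\|u\|_{H^n}^2\les\ve_1^2\jp s^{-2}\|u\|_{H^n}^2$ is too optimistic: after the leading commutator cancellation, $\jp D^n$ must hit the Coulomb convolution, and the best decay one obtains there (as in the paper's $\|\jp D^n(|x|^{-1}*|u|^2)\|_{L^4}$ bound) is $\jp s^{-1+\delta_0}$, not $\jp s^{-2}$. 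Gr\"onwall with $\jp s^{-1+\delta_0}$ still closes since it only produces $\jp t^{C\ve_1^2}$ growth, which is absorbed by the $\jp t^{-\delta_0}$ prefactor in $\Sigma_T$, so the conclusion is unaffected but the claimed intermediate decay rate is wrong.
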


\subsection{Useful inequalities}\label{sub:norm bound}
 For the purpose of proving weighted energy estimates, we introduce some useful inequalities.
\begin{lemma}[Lemma 3.2 in \cite{choz2006-siam}] 
    For any $\mathbb{C}$-valued functions $u\in L^{2}(\R^{2})\cap L^{\infty}(\R^{2})$,
    we get 
    \begin{align}\label{eq:hls}
    \left\||x|^{-1}*(|u|^{2})\right\|_{L^{\infty}(\mathbb{R}^{2})} & \les\|u\|_{L^{2}}\|u\|_{L^{\infty}}.
    \end{align}
    \end{lemma}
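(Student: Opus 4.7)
The estimate is a standard convolution bound of Hardy--Littlewood--Sobolev flavor, tuned to the endpoint where one slot sits in $L^\infty$ and the other in $L^2$. The kernel $|x|^{-1}$ on $\R^2$ is integrable near the origin (since $\int_{|y|\le R}|y|^{-1}\,dy = 2\pi R$) but fails to decay fast enough to be integrable at infinity, so the natural strategy is to split the convolution at a radius $R$ and balance the two pieces.

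The plan is as follows. Fix $x \in \R^2$ and, for a parameter $R > 0$ to be chosen, decompose
\begin{align*}
\int_{\R^2} |x-y|^{-1}\,|u(y)|^2 \, dy
= \int_{|x-y|\le R}|x-y|^{-1}\,|u(y)|^2\,dy + \int_{|x-y|> R}|x-y|^{-1}\,|u(y)|^2\,dy.
\end{align*}
On the near region, I bound $|u(y)|^2 \le \|u\|_{L^\infty}^2$ and integrate the kernel in polar coordinates, producing a contribution $\lesssim R\,\|u\|_{L^\infty}^2$. On the far region, I instead use $|x-y|^{-1}\le R^{-1}$ and bound the remaining $L^2$-integral of $|u|^2$ by $\|u\|_{L^2}^2$, yielding $R^{-1}\|u\|_{L^2}^2$. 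Adding the two contributions,
\begin{align*}
\left\| |x|^{-1}\ast |u|^2\right\|_{L^\infty(\R^2)} \lesssim R\,\|u\|_{L^\infty}^2 + R^{-1}\|u\|_{L^2}^2.
\end{align*}

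The final step is the optimization: I choose $R = \|u\|_{L^2}/\|u\|_{L^\infty}$ (which is finite and nonzero whenever $u$ is not identically zero; the degenerate cases are trivial). With this choice both terms are comparable to $\|u\|_{L^2}\|u\|_{L^\infty}$, giving the claimed bound. There is no serious obstacle here: the only thing to note is that the argument is dimension-specific---the power $-1$ of the kernel and the dimension $2$ together put us exactly in the borderline case where a two-region split with a single scale $R$ suffices, and no Hardy--Littlewood--Sobolev machinery (nor Lorentz interpolation) is actually needed.
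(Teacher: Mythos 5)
Your proof is correct and complete: the two-region split of the convolution at scale $R$, with the near piece controlled by $\|u\|_{L^\infty}^2$ and the integrability of $|y|^{-1}$ near the origin in $\R^2$, the far piece by $\|u\|_{L^2}^2$ and the pointwise kernel bound $R^{-1}$, and then optimization $R=\|u\|_{L^2}/\|u\|_{L^\infty}$, is exactly the standard argument for this endpoint estimate. The paper itself simply cites \cite{choz2006-siam} for this lemma and gives no proof, so there is nothing to compare against; your self-contained elementary proof matches the usual one found there.
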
 
Under the a priori assumption, we find the bounds for the frequency localized terms.
\begin{lemma}\label{lem:norm-esti} Let
    $u$ satisfy the a priori assumption \eqref{assumption-apriori} for some   $\ve_{1}>0$. Suppose the index conditions \eqref{eq:bundle}. Then, for $0\le\gamma\le1$ and a dyadic number $N\in2^{\mathbb{Z}}$,
    \begin{align}\label{PNuinfty}
  \|P_Nu(t)\|_{L^\infty(\mathbb{R}^{2})} \les N^{\gamma}\langle N\rangle^{-k(1-\gamma)}\langle t\rangle^{-(1-\gamma)}\ep_1.
    \end{align}
We also have 
\begin{align}
     \|P_Nf(t)\|_{L^2(\mathbb{R}^{2})}  &\les\min( N^\frac12 \langle t\rangle^{\frac12\delta_0}, \langle N\rangle^{-n})\ep_1  \label{PNxf},\\ 
    \|P_{N}xf(t)\|_{L^2(\mathbb{R}^{2})} &\les \min( N^\frac12 \langle t\rangle^{\frac32\delta_0}, \langle N\rangle^{-2}\langle t\rangle^{\delta_0})\ep_1.  \label{PNx2f}
\end{align}
\end{lemma}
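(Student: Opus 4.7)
The plan is to prove each of the three bounds by a common Littlewood--Paley strategy: use Bernstein/Sobolev for the high-frequency regime and work on the Fourier side with Hölder plus a 2D Gagliardo--Nirenberg inequality for the low-frequency regime, with a simple interpolation step at the end.

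For \eqref{PNuinfty}, I would interpolate two endpoint estimates using the elementary inequality $\min(A,B)\le A^{\ga}B^{1-\ga}$ for $0\le\ga\le1$. The endpoint $\ga=1$ is 2D Bernstein: $\|P_{N}u\|_{L^{\infty}}\les N\|P_{N}u\|_{L^{2}}\les N\|u\|_{L^{2}}\les N\ve_{1}$, where the last step uses mass conservation together with the initial data hypothesis \eqref{condition-initial:semi}. The endpoint $\ga=0$ is obtained by writing $P_{N}=\langle D\rangle^{-k}(\langle D\rangle^{k}P_{N})$ and noting that the symbol of $\langle D\rangle^{k}P_{N}$ has operator norm $\les\langle N\rangle^{k}$ on $L^{\infty}$, so Proposition~\ref{timedecay-prop} gives $\|P_{N}u\|_{L^{\infty}}\les \langle N\rangle^{-k}\|u\|_{W^{k,\infty}}\les \langle N\rangle^{-k}\langle t\rangle^{-1}\ve_{1}$. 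Taking the geometric mean of these two bounds produces the claimed $N^{\ga}\langle N\rangle^{-k(1-\ga)}\langle t\rangle^{-(1-\ga)}\ve_{1}$.

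For the high-frequency halves of \eqref{PNxf} and \eqref{PNx2f}, Bernstein in Sobolev suffices: using $\|f\|_{H^n}=\|u\|_{H^n}$, one has $\|P_{N}f\|_{L^{2}}\les \langle N\rangle^{-n}\|f\|_{H^{n}}$ and $\|P_{N}xf\|_{L^{2}}\les \langle N\rangle^{-2}\|xf\|_{H^{2}}\les \langle N\rangle^{-2}\|u\|_{E_{1}}\les \langle N\rangle^{-2}\langle t\rangle^{\de_{0}}\ve_{1}$ directly from the a priori assumption \eqref{assumption-apriori}. For the low-frequency halves, I work on the Fourier side using $\widehat{xf}=i\na\widehat{f}$ and apply Hölder with $\|\rho_{N}\|_{L^{4}}\les N^{1/2}$:
\begin{align*}
\|P_{N}f\|_{L^{2}}\les \|\rho_{N}\|_{L^{4}}\|\widehat{f}\|_{L^{4}}\les N^{1/2}\|\widehat{f}\|_{L^{4}},\qquad \|P_{N}xf\|_{L^{2}}\les N^{1/2}\|\na\widehat{f}\|_{L^{4}}.
\end{align*}
The 2D Gagliardo--Nirenberg inequality $\|g\|_{L^{4}}\les \|g\|_{L^{2}}^{1/2}\|\na g\|_{L^{2}}^{1/2}$ applied to $g=\widehat{f}$ and $g=\na\widehat{f}$ converts these into weighted $L^{2}$ norms of $f$:
\begin{align*}
\|\widehat{f}\|_{L^{4}}&\les \|f\|_{L^{2}}^{1/2}\|xf\|_{L^{2}}^{1/2}\les \ve_{1}^{1/2}\bigl(\langle t\rangle^{\de_{0}}\ve_{1}\bigr)^{1/2}\les \langle t\rangle^{\de_{0}/2}\ve_{1},\\
\|\na\widehat{f}\|_{L^{4}}&\les \|xf\|_{L^{2}}^{1/2}\|x^{2}f\|_{L^{2}}^{1/2}\les \bigl(\langle t\rangle^{\de_{0}}\ve_{1}\bigr)^{1/2}\bigl(\langle t\rangle^{2\de_{0}}\ve_{1}\bigr)^{1/2}\les \langle t\rangle^{3\de_{0}/2}\ve_{1},
\end{align*}
which produces the desired low-frequency bounds. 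The high- and low-frequency bounds together give the two-sided $\min$ claims.

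There is no serious analytic obstacle in this lemma; it is really an exercise combining Bernstein, Plancherel/Hölder on the Fourier side, and Gagliardo--Nirenberg with the weighted norms supplied by $E_{1}, E_{2}$ in $\|u\|_{\Sigma_{T}}$. The only bookkeeping point is to match precisely the powers of $\langle t\rangle$: the $\tfrac{\de_{0}}{2}$ and $\tfrac{3\de_{0}}{2}$ arise from the geometric mean of the $E_{1}$ and $E_{2}$ time-growth exponents, exactly as dictated by the order of the weight $x$ or $x^{2}$.
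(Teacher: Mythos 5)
Your proof is correct, and for \eqref{PNuinfty} it follows essentially the paper's own route: the $\gamma=0$ endpoint comes from Young's inequality together with Proposition~\ref{timedecay-prop} (this is exactly the paper's first step), and taking a geometric mean with the Bernstein/mass-conservation bound at $\gamma=1$ is equivalent to the paper's chain of interpolating $\|P_N u\|_{L^p}$ between $L^2$ and $L^\infty$ and then applying Bernstein from $L^p$ back to $L^\infty$. For \eqref{PNxf} and \eqref{PNx2f} you take a genuinely different, though equally elementary, low-frequency route: the paper stays on the physical side and writes $\|P_N f\|_{L^2}\les N^{1/2}\|f\|_{L^{4/3}}\les N^{1/2}\|f\|_{L^2}^{1/2}\|xf\|_{L^2}^{1/2}$ (Bernstein $L^{4/3}\to L^2$ plus a weighted Hölder interpolation), whereas you move to the Fourier side, use $\|\rho_N\|_{L^4}\les N^{1/2}$ and the 2D Ladyzhenskaya/Gagliardo--Nirenberg inequality $\|g\|_{L^4}\les\|g\|_{L^2}^{1/2}\|\nabla g\|_{L^2}^{1/2}$ applied to $g=\widehat f$ and $g=\nabla\widehat f$. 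The two mechanisms are dual incarnations of the same $N^{1/2}$-gain and give identical time exponents; your Fourier-side version is arguably a little more transparent since the $x$-weights appear directly as $\xi$-derivatives. One small point worth flagging (it applies to the paper's own argument as well): the high-frequency bound one actually obtains from the a priori norms is $\langle N\rangle^{-n}\langle t\rangle^{\delta_0}\ve_1$ (since $\|u\|_{H^n}$ is only controlled by $\langle t\rangle^{\delta_0}\ve_1$), not the cleaner $\langle N\rangle^{-n}\ve_1$ written in \eqref{PNxf}; this extra $\langle t\rangle^{\delta_0}$ is harmless in every subsequent application, and indeed the paper itself carries it along in places such as the estimate for $\widehat{f_{N_j}}$ in Section~\ref{sec:scattering}, so it reads as a benign typo rather than a gap.
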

\begin{proof}
By Young's inequality, we have from \eqref{eq:time-decay} that 
\begin{align*}
 \|P_N u(t)\|_{L^\infty(\mathbb{R}^{2})} \le  \left\|\mathcal{F}^{-1}\left( \rho_N\langle \xi\rangle^{-k}\right) \right\|_{L^1}\|u(t)\|_{W^{k,\infty}}
 \les \langle N \rangle^{-k}\langle t\rangle^{-1}\ep_1.
\end{align*} 
On the other hand, interpolating time decay estimates \eqref{eq:time-decay} and the mass conservation law \eqref{mass conservation}, we get for $2\le p\le\infty$, 
    \[
    \|P_Nu(t)\|_{L^{p}(\mathbb{R}^{2})}\les\|P_Nu(t)\|_{L^{\infty}}^{1-\frac{2}{p}}\|u_0\|_{L^{2}}^{\frac{2}{p}}\les\langle N\rangle^{-k(1-\frac2p)}\bra{t}^{-\left(1-\frac{2}{p}\right)}\ve_{1}^{2}.
    \]
Then, by Bernstein's inequality, we obtain for $2\le p<\infty$
\begin{align*}
\|P_N u(t)\|_{L^\infty(\R^2)} \les N^{\frac{2}{p}}\| P_Nu(t)\|_{L^p(\R^2)}
\les N^{\frac{2}{p}}\langle N\rangle^{-k(1-\frac2p)}\langle t\rangle^{-(1-\frac{2}{p})}\ep_1.
\end{align*}

Next, by using Bernstein's inequality and interpolating weighted norms, one can obtain  
\begin{align*}
    \| P_N f\|_{L^2(\R^2)} \les N^\frac12\|  f\|_{L^{\frac43}(\R^2)}
    \les N^\frac12 \|u_0\|_{L^2}^\frac12\|xf\|_{L^2}^\frac12
    \les N^\frac12\langle t\rangle^{\frac12\delta_0}\ep_1^2,
\end{align*}
or, one has
\begin{align*}
    \| P_N f\|_{L^2(\R^2)} \les \langle N\rangle^{-n}\|u\|_{H^n(\R^2)}.
\end{align*}
Then, \eqref{PNxf} follows by interpolating above two estimates.
The last inequality \eqref{PNx2f} can be obtained similarly.
\end{proof}

Next, we consider the quadratic terms.  
\begin{lemma}\label{lem:quadratic terms} Let
    $u$ satisfy the a priori assumption \eqref{assumption-apriori} for some
    $\ve_{1}>0$ with the index conditions \eqref{eq:bundle}. For a dyadic number $N\in2^{\Z}$, we have 
    \begin{align}
    \left\Vert P_{N}\Big(|u(t)|^{2}\Big)\right\Vert _{L^{\infty}(\mathbb{R}^{2})} & \les\min(\langle N\rangle^{-k}\langle t\rangle^{-2},N^{2})\ve_{1}^{2},\label{eq:norm-infty}\\
    \left\Vert P_{N}\Big(|u(t)|^{2}\Big)\right\Vert _{L^{2}(\mathbb{R}^{2})} & \les N\langle N\rangle^{-\frac{k}{2}}\ve_{1}^{2}.\label{eq:norm-two}
    \end{align}
\end{lemma}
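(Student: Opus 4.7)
The plan is to prove the two frequency-localized bounds by combining Bernstein's inequality with the pointwise decay estimate \eqref{eq:time-decay} of Proposition \ref{timedecay-prop} and a Leibniz-type rule. The two sides of the minimum in \eqref{eq:norm-infty} come from independent arguments.

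First, the $N^2$ part of \eqref{eq:norm-infty} is immediate: Bernstein's inequality $\|P_N g\|_{L^\infty(\R^2)} \lesssim N^2 \|g\|_{L^1(\R^2)}$ applied to $g = |u|^2$, combined with mass conservation, gives $\|P_N(|u|^2)\|_{L^\infty} \lesssim N^2 \|u\|_{L^2}^2 \lesssim N^2 \varepsilon_1^2$. For the $\langle N\rangle^{-k}\langle t\rangle^{-2}$ part, I move $k$ derivatives outside the frequency projection via the multiplier bound
\[
\|P_N \langle D\rangle^{-k}\|_{L^\infty\to L^\infty} \lesssim \langle N\rangle^{-k},
\]
which follows by rescaling the symbol $\rho_N(\xi)\langle\xi\rangle^{-k}$ on the support of $\rho_N$ so that its Fourier inverse has $L^1(\R^2)$-norm of size $\langle N\rangle^{-k}$. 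A fractional Leibniz (Kato--Ponce) rule then gives $\|\langle D\rangle^k(u\bar u)\|_{L^\infty}\lesssim \|u\|_{W^{k,\infty}}\|u\|_{L^\infty}$, and each factor is $\lesssim \langle t\rangle^{-1}\varepsilon_1$ by \eqref{eq:time-decay}.

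The $L^2$ estimate \eqref{eq:norm-two} then follows by interpolation. Applying the elementary inequality $\|f\|_{L^2}^2 \leq \|f\|_{L^\infty}\|f\|_{L^1}$ to $f = P_N(|u|^2)$, together with $\|P_N(|u|^2)\|_{L^1}\lesssim \|u\|_{L^2}^2\lesssim \varepsilon_1^2$ and the two-part $L^\infty$ bound above, yields
\[
\|P_N(|u|^2)\|_{L^2} \lesssim \min\bb{\langle N\rangle^{-k/2}\langle t\rangle^{-1},\, N}\,\varepsilon_1^2.
\]
A short case check (for $N\leq 1$ the factor $N$ dominates; for $N\geq 1$ one uses $N^{-k/2}\langle t\rangle^{-1} \leq N^{-k/2} \leq N\cdot N^{-k/2}$ since $N\geq 1$) shows that this minimum is bounded by $N\langle N\rangle^{-k/2}\varepsilon_1^2$, which is \eqref{eq:norm-two}.

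The main, though quite minor, technical point is the $L^\infty$ Leibniz rule at the non-integer exponent $k=n/100$; this is standard via paraproduct or Coifman--Meyer arguments (cf.\ Lemma \ref{lem:coif}) and can alternatively be reduced to the integer case $\lfloor k\rfloor$ by interpolation, since \eqref{eq:time-decay} supplies derivative control at every order $0 \leq \ell \leq k$. Everything else is a routine application of Bernstein, mass conservation, and the pointwise decay already established in Section 2.
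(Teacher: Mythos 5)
Your argument is correct in outline, and since the paper itself offers no proof here (it merely cites Lemma~2.4 of the reference \cite{CKLY2022}), what you have is a self-contained alternative. Two remarks on the details. The appeal to ``the $L^\infty$ Kato--Ponce rule'' needs care: at the endpoint $p=\infty$ the naive Leibniz inequality $\|D^s(fg)\|_{L^\infty}\lesssim\|D^sf\|_{L^\infty}\|g\|_{L^\infty}+\|f\|_{L^\infty}\|D^sg\|_{L^\infty}$ actually fails for noninteger $s$ (Bourgain--Li), and the fallback of ``interpolating to $\lfloor k\rfloor$'' would not by itself cure this, since interpolating the $L^\infty$-based estimate in the derivative order is just as delicate. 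Fortunately, the bound you actually need is localized at a single output frequency $N$, so the paraproduct route you gesture at is the right one and is elementary: split $u=P_{\le 1}u+\sum_{M>1}P_{M}u$, note that $P_N(P_{N_1}u\,\overline{P_{N_2}u})\ne0$ forces $\max(N_1,N_2)\gtrsim N$, estimate the piece at scale $\gtrsim N$ using \eqref{PNuinfty} with $\gamma=0$ (giving $\langle M\rangle^{-k}\langle t\rangle^{-1}\ve_1$), keep the other factor whole via $\|u\|_{L^\infty}\lesssim\langle t\rangle^{-1}\ve_1$, and sum the geometric tail $\sum_{M\gtrsim N}\langle M\rangle^{-k}\lesssim\langle N\rangle^{-k}$; for $N\le 1$ just bound $\|P_N(|u|^2)\|_{L^\infty}\lesssim\|u\|_{L^\infty}^2$. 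There is no dyadic summation in $N$ afterwards, so no endpoint loss. With that clarification, the rest of your proof is clean: Bernstein plus mass conservation gives the $N^2$ branch, and the interpolation $\|\cdot\|_{L^2}^2\le\|\cdot\|_{L^\infty}\|\cdot\|_{L^1}$ with $\|P_N(|u|^2)\|_{L^1}\lesssim\|u\|_{L^2}^2\lesssim\ve_1^2$ gives \eqref{eq:norm-two}; in fact your interpolation yields the slightly sharper $\min(\langle N\rangle^{-k/2}\langle t\rangle^{-1},N)\ve_1^2$, which as you observe is dominated by $N\langle N\rangle^{-k/2}\ve_1^2$ in both regimes $N\le1$ and $N\ge1$.
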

\begin{proof}
We refer to \cite[Lemma~2.4]{CKLY2022} where the three dimensional case is proved. 
\end{proof}
We observe that no time decay was obtained in \eqref{eq:norm-two}. In the following lemma, 
we find the time decay in \eqref{eq:norm-two} at the cost of derivative. 
Especially, the space resonance is employed to obtain the almost second order time decay $|t|^{-2+}$.
\begin{lemma} Let
    $u$ satisfy the a priori assumption \eqref{assumption-apriori} for some
    $\ve_{1}>0$ with the index conditions \eqref{eq:bundle}. Then, for a dyadic number $L\in2^{\Z}$, we have
\begin{align} 
    \| P_{N}|u(t)|^2 \|_{L^2(\R^2)} &\les \langle t \rangle^{-1+\delta_0}\langle N\rangle^{-k}\ep^2,  \nonumber \\ 
\| P_{N}|u(t)|^2 \|_{L^2(\R^2)} &\les \langle t \rangle^{-2+\frac{3}{2}\delta_0}N^{-1}\langle N\rangle^{-1}\ep^2. \label{ineq:PLu2}
\end{align}
\end{lemma}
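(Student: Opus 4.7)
The first inequality is essentially an upgrade of \eqref{eq:norm-two} inserting genuine time decay. I would decompose dyadically
\[
P_N(|u|^2)=\sum_{N_1,N_2}P_N\bigl(P_{N_1}u\cdot\overline{P_{N_2}u}\bigr),
\]
and split according to the support constraint $N\les\max(N_1,N_2)$ into the high-low ($N_1\sim N\ge N_2$) and the high-high ($N_1\sim N_2\gtrsim N$) interactions. In each term apply H\"older with the high-frequency factor in $L^\infty$ via the dispersive bound \eqref{PNuinfty} (which supplies $\jp{N}^{-k}\jp{t}^{-1}\ve_1$) and the companion factor in $L^2$ via \eqref{PNxf} (which sums in the low dyadic frequency at the cost of $\jp{t}^{\delta_0/2}$ from interpolating the weighted norm against the Sobolev norm). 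Summation over the dyadic parameters then produces the claimed $\jp{t}^{-1+\delta_0}\jp{N}^{-k}\ve_1^2$.

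For the stronger time decay in the second inequality I would work on the Fourier side via the interaction representation $u=e^{-it\brad}f$. By Plancherel,
\[
\|P_N|u|^2\|_{L^2_x}\;\sim\;\bigl\|\rho_N(\eta)\,\wh{|u|^2}(\eta)\bigr\|_{L^2_\eta},\qquad \wh{|u|^2}(\eta)=\int_{\R^2}e^{it\psi(\sigma,\eta)}\wh{f}(\sigma)\overline{\wh{f}(\sigma-\eta)}\,d\sigma,
\]
where $\psi(\sigma,\eta)=\jp{\sigma-\eta}-\jp\sigma$ is precisely the \emph{space-resonance} phase of the quadratic interaction. Its $\sigma$-gradient
\[
\nabla_\sigma\psi=\frac{\sigma-\eta}{\jp{\sigma-\eta}}-\frac{\sigma}{\jp\sigma}
\]
vanishes only on $\{\eta=0\}$ and satisfies $|\nabla_\sigma\psi|\sim|\eta|$ on bounded $\sigma$\,---\,the same null structure already exploited in \eqref{Null structure}. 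I would then integrate by parts twice in $\sigma$ using the identity $\frac{1}{it}\frac{\nabla_\sigma\psi}{|\nabla_\sigma\psi|^2}\cdot\nabla_\sigma e^{it\psi}=e^{it\psi}$, producing a prefactor $t^{-2}$ at the cost of two $\nabla_\sigma$'s landing on $\wh{f}(\sigma)\overline{\wh{f}(\sigma-\eta)}$; via $\nabla_\sigma\wh f=-i\wh{xf}$ these feed into the weighted norms $\|xf\|_{L^2},\,\|x^2f\|_{L^2}\les\jp{t}^{2\delta_0}\ve_1$ controlled by the $E_1,E_2$-pieces of \eqref{assumption-apriori}. In the regime $N\les 1$, where $|\nabla_\sigma\psi|\sim N$ uniformly on bounded $\sigma$, the IBP yields the pointwise estimate $|\wh{|u|^2}(\eta)|\les (tN)^{-2}\jp{t}^{2\delta_0}\ve_1^2$, and multiplying by $\|\rho_N\|_{L^2_\eta}\sim N$ gives exactly $N^{-1}\jp{t}^{-2+2\delta_0}\ve_1^2$, matching the claim for low $N$.

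The main obstacle lies in the high-frequency regime $N\gg 1$, where $|\nabla_\sigma\psi|$ is no longer uniformly comparable to $|\eta|$: when $\sigma$ is nearly parallel to $\eta$ with $|\sigma|,|\sigma-\eta|\sim N$, the two vectors $\sigma/\jp\sigma$ and $(\sigma-\eta)/\jp{\sigma-\eta}$ cancel to leading order and $|\nabla_\sigma\psi|$ can drop as low as $\sim N^{-2}$, so naive IBP loses too much. The remedy is a dyadic decomposition of $\sigma$ into three regions: the off-diagonal pieces $|\sigma|\les 1$ or $|\sigma-\eta|\les 1$, where $|\nabla_\sigma\psi|\sim 1$ and the IBP proceeds cleanly, picking up an extra $\jp{N}^{-2}$ from the frequency-localized bounds $\|P_N(xf)\|_{L^2},\|P_N(x^2f)\|_{L^2}\les\jp{N}^{-2}\jp{t}^{2\delta_0}\ve_1$ (obtained via Bernstein from the $H^2$-control on $xf, x^2f$); and the diagonal piece $|\sigma|,|\sigma-\eta|\sim N$, which is further localized by the angle between $\sigma$ and $\eta$. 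On the diagonal one trades the near-parallel bad directions (where IBP degenerates) against the very strong pointwise frequency decay $|\wh f(\sigma)|,|\wh f(\sigma-\eta)|\les\jp N^{-k}\ve_1$ supplied by the scattering norm $\|\jp\xi^k\wh f\|_{L^\infty}\le\ve_1$; the angular parameter is then optimized to balance the two competing estimates. Because $k$ is enormous compared to the fixed losses, all exponent deficits are easily absorbed, and after summing over the angular and dyadic parameters one recovers the missing $\jp{N}^{-1}$ to produce the claimed $N^{-1}\jp{N}^{-1}\jp{t}^{-2+3\delta_0/2}\ve_1^2$. The careful bookkeeping of these regions is the technical heart of the proof.
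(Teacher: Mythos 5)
Your treatment of the first inequality is fine (a dyadic decomposition plus H\"older with the dispersive $L^\infty$ bound on one factor and the weighted $L^2$ bound on the other is in the same spirit as the paper's Coifman--Meyer argument, and in fact your bookkeeping would produce the slightly sharper exponent $\langle t\rangle^{-1+\delta_0/2}$).

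For the second inequality your approach shares the right idea — space resonance and integration by parts in $\sigma$ — but it diverges from the paper in a way that creates two concrete problems. The paper integrates by parts in $\sigma$ \emph{once}, not twice. The single IBP supplies one factor of $t^{-1}$; the second factor $\langle t\rangle^{-1}$ is then obtained for free by placing the companion function in $L^\infty$ via the dispersive estimate $\|P_{N_2}u\|_{L^\infty}\lesssim \langle N_2\rangle^{-k}\langle t\rangle^{-1}$. The surviving weighted factor is only $\|P_{N_1}xf\|_{L^2}\lesssim N_1^{1/2}\langle t\rangle^{\frac{3}{2}\delta_0}$ (an interpolation of $\|xf\|_{L^2}$ and $\|x^2f\|_{L^2}$), which is exactly how the exponent $\frac{3}{2}\delta_0$ appears. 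Your two-IBP route forces you to spend $\|x^2f\|_{L^2}\lesssim\langle t\rangle^{2\delta_0}$ directly, giving $\langle t\rangle^{-2+2\delta_0}$ rather than $\langle t\rangle^{-2+\frac{3}{2}\delta_0}$. This weaker exponent is not harmless: in the estimate for $\mathcal{J}^4$ in the proof of Proposition~\ref{prop-energy}, the bound \eqref{ineq:PLu2} is combined with the $\langle s\rangle^{-1+\frac{1}{4}\delta_0}$ factor from $\|P_{N_1}u\|_{L^\infty}$ to land exactly on the threshold $\langle s\rangle^{-1+2\delta_0}$ that the $E_2$-bootstrap requires; your version would overshoot to $\langle s\rangle^{-1+\frac{9}{4}\delta_0}$ and force a renegotiation of the splitting parameters elsewhere.

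More seriously, the high-frequency regime $N\gg 1$ that you flag as ``the technical heart'' is left entirely to a promised angular decomposition that you do not carry out. This is a genuine gap: the paper sidesteps it precisely because a single IBP produces a multiplier whose loss along the degenerate diagonal is only $N^{-1}\max(\langle N_1\rangle,\langle N_2\rangle)\min(\langle N_1\rangle,\langle N_2\rangle)^{O(1)}$, controlled by the lower bound
\begin{align*}
\left|\nabla_\sigma\bigl(\langle\sigma+\eta\rangle-\langle\sigma\rangle\bigr)\right| \gtrsim \frac{|\eta|}{\max(\langle\eta+\sigma\rangle,\langle\sigma\rangle)\,\min(\langle\eta+\sigma\rangle,\langle\sigma\rangle)^2},
\end{align*}
which feeds directly into a Coifman--Meyer bound $C_{\mathbf m}\lesssim N^{-1}\max(\langle N_1\rangle,\langle N_2\rangle)\min(\langle N_1\rangle,\langle N_2\rangle)^{10}$ and is then harmlessly absorbed by the $\langle N_j\rangle^{-k}$ or $\langle N_j\rangle^{-n}$ decay of the profiles. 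No angular localization or optimization is needed. With two IBPs you square the multiplier loss, and the argument you sketch for the diagonal part of the high-frequency region would have to be executed in detail before the proof is complete. I would strongly recommend reverting to a single IBP and letting dispersion, rather than a second integration by parts, supply the second power of $t^{-1}$.
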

\begin{proof}The Plancherel's identity yields that 
    \begin{align*}
        \| P_{N}|u(t)|^2 \|_{L^2(\R^2)}
        = \| \rho_N \mathcal{F} \big( |u(t)|^2\big) \|_{L^2(\R^2)}.
    \end{align*}
We write 
 \begin{align*}
    \rho_N(\eta)\mathcal{F} \big( |u(t)|^2\big) (\eta) &= \frac{1}{(2\pi)^2}\int_{\R^2}\rho_N(\eta) \widehat{u}(\sigma)\overline{\widehat{u}(\sigma+\eta)} d\sigma \\
    &=\frac{1}{(2\pi)^2}\int_{\R^2} \frac{\rho_N(\eta)}{\langle \sigma\rangle^k\langle\sigma+\eta\rangle^k}\widehat{\langle D\rangle^k u}(\sigma)\overline{\widehat{\langle D\rangle^k u}(\sigma+\eta)} d\sigma. 
\end{align*}
If we let 
$$
\mathbf{m}(\eta,\sigma):=\frac{\rho_N(\eta)}{\langle \sigma\rangle^k\langle\sigma+\eta\rangle^k},
$$ 
 one can verify that 
\begin{align*}
    C_{\mathbf{m}} \lesssim \langle N\rangle^{-k},
\end{align*}
where the constant $C_{\mathbf{m}}$ is defined in \eqref{multiplier bound}. Thus, by the Coifman-Meyer estimates \eqref{eq:coif-1}, we have 
\begin{align*}
    \| \rho_N\mathcal{F} \big( |u(t)|^2\big) \|_{L^2(\R^2)}
    \lesssim \langle N\rangle^{-k}\|u\|_{H^k}\|\langle D\rangle^{k} u\|_{L^\infty}  
    \lesssim \langle N\rangle^{-k} \langle t\rangle^{-1+\delta_0}\ep_1^2.
\end{align*}


 Next, we consider \eqref{ineq:PLu2}. we write 
\begin{align}\label{FT |u|^2}
    \mathcal{F} \big( |u(t)|^2\big) (\eta)
    = \frac{1}{(2\pi)^2}\int_{\R^2} e^{it(\langle \sigma+\eta\rangle - \langle\sigma\rangle )}\widehat{f}(t,\sigma)\overline{\widehat{f}(t,\eta+\sigma}) d\sigma,
\end{align}
where $f(t,x)=e^{it\langle D\rangle}u(t,x)$.
We perform an integration by parts to obtain 
\begin{align*}
    \mathcal{F} \big( |u(t)|^2\big) (\eta)
    =\frac{1}{(2\pi)^2}\Big( I_1(t,\eta)+I_2(t,\eta)+I_3(t,\eta) \Big),
\end{align*}
where
\begin{align*}
    I_1(t,\eta)&=\frac{i}{t}\int_{\R^2} \frac{\nabla_\sigma(\langle \sigma+\eta\rangle - \langle\sigma\rangle)}{|\nabla_\sigma(\langle \sigma+\eta\rangle - \langle\sigma\rangle)|^2}e^{-it\langle\sigma\rangle} \cdot  \widehat{xf}(t,\sigma)\overline{\widehat{u}(t,\eta+\sigma}) d\sigma , \\
    I_2(t,\eta)&=\frac{i}{t}\int_{\R^2} \frac{\nabla_\sigma(\langle \sigma+\eta\rangle - \langle\sigma\rangle)}{|\nabla_\sigma(\langle \sigma+\eta\rangle - \langle\sigma\rangle)|^2}e^{it\langle \sigma+\eta\rangle } \cdot  \widehat{u}(t,\sigma)\overline{\widehat{xf}(t,\eta+\sigma})  d\sigma, \\ 
    I_3(t,\eta)&= \frac{i}{t}\int_{\R^2} \nabla_\sigma \cdot \left(  \frac{\nabla_\sigma(\langle \sigma+\eta\rangle - \langle\sigma\rangle)}{|\nabla_\sigma(\langle \sigma+\eta\rangle - \langle\sigma\rangle)|^2} \right) \widehat{u}(t,\sigma)\overline{\widehat{u}(t,\eta+\sigma}) d\sigma.
\end{align*}


First, we consider $I_1$. We perform dyadic decomposition in the variables $\sigma$ and $\eta+\sigma$ to write 
\begin{align}\label{I1dyadic}
    \rho_N (\eta)I_1(t,\eta)&=\frac{i}{t}\sum_{(N_1,N_2)\in(2^{\Z})^2} I_1^{(N,N_1,N_2)}(t,\eta), \\ 
    I_1^{(N,N_1,N_2)}(t,\eta)&:=\frac{i}{t}\int_{\R^2}\mathbf{m}_{(N,N_1,N_2)}(\eta,\sigma)e^{-it\langle \sigma\rangle} \widehat{P_{N_1}xf}(t,\sigma) \overline{\widehat{P_{N_2}u}(t,\eta+\sigma}) d\sigma, \nonumber
\end{align}
where 
\begin{align*}
    \mathbf{m}_{(N,N_1,N_2)}(\eta,\sigma)=\frac{\nabla_\sigma(\langle \sigma+\eta\rangle - \langle\sigma\rangle)}{|\nabla_\sigma(\langle \sigma+\eta\rangle - \langle\sigma\rangle)|^2}\rho_N (\eta)\rho_{N_1} (\eta+\sigma)\rho_{N_2} (\sigma). 
\end{align*}
Since
\begin{align*}
    \left| \nabla_\sigma(\langle \sigma+\eta\rangle - \langle\sigma\rangle)\right|
  =\left|\frac{\eta+\sigma}{\bra{\eta+\sigma}}-\frac{\sigma}{\bra{\sigma}}\right|
    \gtrsim \frac{|\eta|}{\max(\langle \eta+\sigma\rangle, \langle\sigma\rangle) \min(\langle \eta+\sigma\rangle, \langle\sigma\rangle)^2}, 
\end{align*}
a direct computation yields that \footnote{For detailed computation, we refer to \cite{CKLY2022}.} 
\begin{align*}
  |\mathbf{m}_{(N,N_1,N_2)}(\eta,\sigma)| \les N^{-1}\max(\langle N_1\rangle, \langle N_2\rangle) \min(\langle N_1\rangle, \langle N_2\rangle)^2 
\end{align*}
and 
\begin{align}\label{cmbound}
    C_{\mathbf{m}_{(N,N_1,N_2)}} \les N^{-1}\max(\langle N_1\rangle, \langle N_2\rangle) \min(\langle N_1\rangle, \langle N_2\rangle)^{10}.
\end{align}
Applying the operator inequality \eqref{eq:coif-1} with \eqref{cmbound}, we obtain 
\begin{align*}
    & \left\|I_1^{(N,N_1,N_2)}(t)\right\|_{L^2(\R^2)} \\ 
    &\qquad \les |t|^{-1}   N^{-1}\max(\langle N_1\rangle, \langle N_2\rangle) \min(\langle N_1\rangle, \langle N_2\rangle)^{10}
    \|P_{N_1}xf(t)\|_{L^2} \|P_{N_2}u(t)\|_{L^\infty}
\end{align*}
We observe that the sums in \eqref{I1dyadic} are actually taken over those indexes $(N_1,N_2)$ satisfying  
\begin{align*}
    N\les N_1\sim N_2 \; \text{ or } \; N_{\min}\ll N_{\max} \sim N,
\end{align*}
where $N_{\max} = \max(N_1, N_2)$ and $N_{\min} = \min(N_1, N_2)$. Thus, using Lemma~\ref{lem:norm-esti}, we estimate
\begin{align*}
 &\sum_{ N_1 \les N_2} \left\| I_1^{(N,N_1,N_2)}(t) \right\|_{L^2(\R^2)} \\    
 &\les |t|^{-1} N^{-1}  
 \Big( \sum_{N_{1}\ll N_2\sim N}  + \sum_{ N\les N_1\sim N_2} \Big)\langle N_2\rangle^{1-k} \langle N_{1}\rangle^{10} \min( N_1^\frac12 \langle t\rangle^{\frac32\delta_0}, \langle N_1\rangle^{-2} \langle t\rangle^{\delta_0}) \langle t\rangle^{-1}\ep_1^2 \\ 
 &\les \langle t\rangle^{-2+\frac32\delta_0}N^{-1}\langle N\rangle^{-1}
\end{align*}
and 
\begin{align*}
	\sum_{N_{2}\ll N_{1}\sim N} \left\| I_1^{(N,N_1,N_2)}(t) \right\|_{L^2(\R^2)}  & \les |t|^{-1}N^{-1}   
\sum_{N_{2}\ll N_{1}\sim N}  \langle N_1\rangle^{-1} \langle N_2\rangle^{10-k+k\frac{\delta_0}{2}}  \langle t\rangle^{\delta_0} N_2^{\frac{\delta_0}{2}}\langle t\rangle^{-1+\frac{1}{2}\delta_0}\ep_1^2 \\ 
&\quad \les \langle t\rangle^{-2+\frac{3}{2}\delta_0}N^{-1}   \langle N\rangle^{-1} 
\sum_{N_{2}\ll  N}  \langle N_2\rangle^{10-k+k\frac{\delta_0}{2}}  N_2^{\frac{\delta_0}{2}}\ep_1^2 \\ 
&\quad  \les \langle t\rangle^{-2+\frac32\delta_0}N^{-1}\langle L\rangle^{-1}.
\end{align*}
 By the symmetry, we may omit the proof of estimates for $I_2$.

For $I_3$, we also perform dyadic decomposition and write 
\begin{align*}
    \rho_N (\eta)I_3(t,\eta)&=\frac{i}{t}\sum_{N_1,N_2\in(2^{\Z})^2} I_3^{(N,N_1,N_2)}(t,\eta), \\ 
    I_3^{(N,N_1,N_2)}(t,\eta)&=\frac{i}{t}\int_{\R^2}\mathbf{m'}_{(N,N_1,N_2)}(\eta,\sigma)\widehat{P_{N_1}u}(t,\sigma) \overline{\widehat{P_{N_2}u}(t,\eta+\sigma}) d\sigma,
\end{align*}
where 
\begin{align*}
    \mathbf{m'}_{(N,N_1,N_2)}(\eta,\sigma)=\nabla_{\sigma}\cdot \left( \frac{\nabla_\sigma(\langle \sigma+\eta\rangle - \langle\sigma\rangle)}{|\nabla_\sigma(\langle \sigma+\eta\rangle - \langle\sigma\rangle)|^2}\right)\rho_N(\eta)\rho_{N_1} (\eta+\sigma)\rho_{N_2} (\sigma). 
\end{align*}
Then, one easily verifies that 
\begin{align*}
    |\mathbf{m'}_{(N,N_1,N_2)}(\eta,\sigma)| \les N^{-1}\max(\langle N_1\rangle, \langle N_2\rangle)\min(\langle N_1\rangle, \langle N_2\rangle)^3 
  \end{align*}
  and 
  \begin{align}\label{cmbound2}
      C_{\mathbf{m'}_{(N,N_1,N_2)}} \les N^{-1} \max(\langle N_1\rangle, \langle N_2\rangle)\min(\langle N_1\rangle, \langle N_2\rangle)^{11}.
  \end{align}
Applying the operator inequality \eqref{eq:coif-1} with \eqref{cmbound2}, we obtain 
\begin{align*}
	\begin{aligned}
		&\sum_{(N_1,N_2)\in(2^{\Z})^2}  \left\| I_3^{(N,N_1,N_2)}(t) \right\|_{L^2(\R^2)} \\ 
		&\;\les |t|^{-1}  N^{-1} \sum_{(N_1,N_2)\in(2^{\Z})^2}  \max(\langle N_1\rangle, \langle N_2\rangle)\min(\langle N_1\rangle, \langle N_2\rangle)^{11}
		\|P_{N_1}u\|_{L^2} \|P_{N_2}u(t)\|_{L^\infty}\\
    &\; \les |t|^{-2+\frac32\delta_0}N^{-1}\sum_{(N_1,N_2)\in(2^{\Z})^2}\
    \max(\langle N_1\rangle, \langle N_2\rangle)\min(\langle N_1\rangle, \langle N_2\rangle)^{11}
    N_1^\frac12  \langle N_1\rangle^{-n+\frac12} 
    N_2^{\delta_0}\langle N_2\rangle^{-k+k\delta_0}\ep_1^2 \\ 
    &\; \les \langle t\rangle^{-2+\delta_0}N^{-1}\langle N\rangle^{-1}.
\end{aligned}
\end{align*} 
Here we used Lemma~\ref{lem:norm-esti} in the second inequality. This finishes the proof.
\end{proof}


\subsection{Proof of Proposition~\ref{prop-energy}} Now, we begin with the proof of Proposition~\ref{prop-energy}. 
\begin{proof}[Proof of \eqref{eq:first-moment}]
Let us first handle the high Sobolev norm in $\|u(t)\|_{E_{1}}$.
This can be bounded by Hardy-Littlewood-Sobolev inequality and \eqref{eq:hls}. Indeed, we first observe that by interpolation of the time decay estimates \eqref{eq:time-decay} and the conservation law \eqref{mass conservation}, we get for $2\le p\le\infty$, 
\[
\|u(t)\|_{L_{x}^{p}(\R^2)}\les\|u(t)\|_{L_{x}^{\infty}}^{1-\frac{2}{p}}\|u(t)\|_{L_{x}^{2}}^{\frac{2}{p}}\les\bra{t}^{-\left(1-\frac{2}{p}\right)}\ve_{1}^{2}.
\]
Then, we estimate 
\begin{align*}
\normo{|x|^{-1}*|u(t)|^{2}}_{L_{x}^{\infty}(\R^2)}\les\|u_{0}\|_{L_{x}^{2}}\|u(t)\|_{L_{x}^{\infty}}\les\bra{t}^{-1}\ve_{1}^{2}
\end{align*}
and 
\begin{align*}
\normo{\bra{D}^{n}\left(|x|^{-1}*|u(t)|^{2}\right)}_{L_x^4(\R^2)} & \les\|u(t)\|_{H^{n}}\|u(t)\|_{L_{x}^{4}}\les\bra{t}^{-\frac{1}{2}+\de_{0}}\ve_{1}^{2},
\end{align*}
which imply 
\[
\|u(t)\|_{H^{n}(\R^2)}\les\ve_{0}+C\bra{t}^{\de_{0}}\ve_{1}^{3}.
\]

Let us consider $\|xe^{it\bra{D}}u\|_{H^{2}(\R^2)}$. Note that $$\|xe^{it\jp D}u\|_{H^{2}}\sim\left\|\langle\xi\rangle^2\mathcal{F}\left(xe^{it\jp D}u\right)\right\|_{L_{\xi}^{2}}\sim\left\Vert \langle\xi\rangle^2\nabla_{\xi}\widehat{f}\,\right\Vert _{L_{\xi}^{2}}.$$
By the Duhamel's formula \eqref{eq:duhamel},  $\nabla_{\xi}\widehat{f}$ can be represented by 
\begin{align*}
\nabla_{\xi}\widehat{f}(t,\xi) & =\nabla_{\xi}\widehat{u_{0}}(\xi)+\frac{i\lam}{2\pi}\int_0^t\Big[\mathcal{I}^{1}(s,\xi)+\mathcal{I}^{2}(s,\xi)\Big]ds,
\end{align*}
where 
\begin{align*}
\mathcal{I}^{1}(s,\xi) & =\int_{\mathbb{R}^{2}}e^{is{\phi}(\xi,\eta)}|\eta|^{-1}\nabla_{\xi}\widehat{f}(\xi-\eta)\mathcal{F}(|u|^{2})(\eta)\,d\eta,  \\
\mathcal{I}^{2}(s,\xi) & =is\int_{\mathbb{R}^{2}}\nabla_{\xi}\phi(\xi,\eta)e^{is\phi(\xi,\eta)}|\eta|^{-1}\widehat{f}(\xi-\eta)\mathcal{F}(|u|^{2})(\eta)\,d\eta.
\end{align*}
Here we defined a resonant function $\phi$:
\begin{align} \label{eq:pm}\phi(\xi,\eta) & =\langle\xi\rangle-\langle\xi-\eta\rangle, \;\;\text{ and }\;\;
 \nabla_{\xi}\phi(\xi,\eta)=\frac{\xi}{\langle\xi\rangle}-\frac{\xi-\eta}{\langle\xi-\eta\rangle}.
\end{align}
We estimate the contribution from $\mathcal{I}^{1}$ and $\mathcal{I}^{2}$
under the a priori assumption \eqref{assumption-apriori} as follows:
\[
\normo{\langle\xi\rangle^2\mathcal{I}^{1}(s,\xi)}_{L_{\xi}^{2}}+\normo{\langle\xi\rangle^2\mathcal{I}^{2}(s,\xi)}_{L_{\xi}^{2}}\les\langle s\rangle^{-1+\de_{0}}\ve_{1}^{3}.
\]
\noindent \uline{Estimates for \mbox{$\mathcal{I}^{1}$}}. By Lemma \ref{lem:norm-esti}
and a priori assumption \eqref{assumption-apriori}, we get 
\begin{align}\begin{aligned}\label{I1}
\normo{\langle\xi\rangle^2\mathcal{I}^{1}(s,\xi)}_{L_{\xi}^{2}} 
& \les  \| u(s)\|_{L^\infty} \| u(s)\|_{H^2}\|xf(s)\|_{H^2}   \les \bra{s}^{-1+\de_{0}}\ve_{1}^{3}.
\end{aligned} \end{align}

\noindent \uline{Estimates for \mbox{$\mathcal{I}^{2}$}}. We decompose the frequency variables $|\xi|,|\xi-\eta|$ into the dyadic pieces $N_{0},N_{1}\in2^{\Z}$, respectively.
We also divide $|\eta|$ associated to the potential into $N_2\in2^{\Z}$. Then, we write 
\begin{align*}
    \langle \xi\rangle^2\mathcal{I}^{2}(s,\xi)&=\sum_{\mathbf{N}:=(N_{0},N_{1},N_2)\in (2^{\Z})^3}\mathcal{I}_{\mathbf{N}}^{2}(s,\xi),\\
 \mathcal{I}_{\mathbf{N}}^{2}(s,\xi) &=is\int_{\mathbb{R}^{2}}\mathbf{m}_{\mathbf{N}}(\xi,\eta)e^{is\phi(\xi,\eta)}\widehat{P_{N_1}f}(\xi-\eta)\widehat{P_{N_2}(|u|^{2})}(\eta)\,d\eta,
\end{align*}
where 
\begin{align*}
    \mathbf{m}_{\mathbf{N}}(\xi,\eta) 
    = \langle \xi\rangle^2 |\eta|^{-1} \nabla_{\xi}\phi(\xi,\eta)\rho_{N_{0}}(\xi)\rho_{L}(\eta)\rho_{N_{1}}(\xi-\eta).
\end{align*}
We observe that the sums are actually taken over those indices $(N_{0},N_{1},N_2)$ in the following set 
\begin{align*}
 \mathcal{N}:= \left\{ (N_{0},N_{1},N_2)\in (2^{\Z})^3 \; | \; N_0\les N_1\sim N_2 \text{ or } N_0\sim \max(N_1,N_2)\right\}.
\end{align*}
Indeed,  the integral $\mathcal{I}_{\mathbf{N}}^{2}$ is zero for $(N_{0},N_{1},N_2)\notin \mathcal{N}$.

We can estimate $\mathcal{I}_{\mathbf{N}}^{2}(s)$ in two ways. 
First, we have by H\"older inequality
\begin{align}\label{I2:holder}
    \normo{\mathcal{I}_{\textbf{N}}^{2}(s,\xi)}_{L_{\xi}^{2}} 
\les |s|\normo{\textbf{m}_{\mathbf{N}}(\xi,\eta)}_{L_{\xi,\eta}^{\infty}}\|\rho_{N_{0}}\|_{L^{2}}\normo{P_{N_2}|u(s)|^{2}}_{L^{2}}\normo{P_{N_{1}}{f}(s)\,}_{L^{2}}.
\end{align}
On the other hand, by using the operator inequality \eqref{eq:coif-1} with $C_{\mathbf{m}_{\mathbf{N}}}$ satisfying
\begin{align*}
    C_{\mathbf{m}_{\mathbf{N}}}:= \left\Vert \iint_{\mathbb{R}^{2}\times\R^{2}}\mathbf{m}_{\mathbf{N}}(\xi,\eta)e^{ix\cdot\xi}e^{iy\cdot\eta}\,d\eta d\xi\right\Vert _{L_{x,y}^{1}(\mathbb{R}^{2}\times\R^{2})}<\infty ,
\end{align*}
we have 
\begin{align}\label{I2:CM}
    \normo{\mathcal{I}_{\textbf{N}}^{2}(s,\xi)}_{L_{\xi}^{2}} 
    \les  |s|C_{\mathbf{m}_{\mathbf{N}}}\normo{P_{N_2}|u(s)|^{2}}_{L^{\infty}}\normo{P_{N_{1}}{f}(s)\,}_{L^{2}}.
\end{align}
From the following inequality
\begin{align}\label{bound of grad phi}
   \left|  \nabla_{\xi}\phi(\xi,\eta) \right| =\left|  \frac{\xi}{\langle\xi\rangle}-\frac{\xi-\eta}{\langle\xi-\eta\rangle} \right| 
   \les \frac{|\eta|}{\max(\langle\xi\rangle,\langle\xi-\eta\rangle)},
\end{align}
one can readily verify that 
\begin{align}\label{pointwise bound of m}
    \sup_{\xi,\eta\in\R^2}\left|\mathbf{m}_{\mathbf{N}}(\xi,\eta)\right|\les \langle N_0\rangle^2\max(\langle N_0\rangle, \langle N_1\rangle)^{-1}  \; \text{ and }  \;  C_{\mathbf{m}_{\mathbf{N}}}\les  \langle N_0\rangle^2\max(\langle N_0\rangle, \langle N_1\rangle)^{-1} .
\end{align} 
Now, we estimate the sum over those indexes $N_0$ such that $N_0\le\langle s\rangle^{-2}$ by applying \eqref{I2:holder} together with \eqref{eq:norm-two} and \eqref{PNxf}
\begin{align*}
 \sum_{\substack{(N_{0},N_{1},N_2)\in\mathcal{N} \\ N_0\le \langle s\rangle^{-2}}}\normo{\mathcal{I}_{\textbf{N}}^{2}(s)}_{L^{2}}
 &\les |s|\sum_{\substack{(N_{0},N_{1},N_2)\in\mathcal{N} \\ N_0\le \langle s\rangle^{-2}}}\langle N_0\rangle^2\normo{\textbf{m}_{\mathbf{N}}}_{L^{\infty}}\normo{P_{N_2}|u|^{2}(s)}_{L^{2}}\normo{P_{N_{1}}{f}(s)\,}_{L^{2}} \\ 
 &\les |s|\sum_{\substack{(N_{0},N_{1},N_2)\in\mathcal{N} \\ N_0\le \langle s\rangle^{-2}}}\langle N_0\rangle^2N_0\max(\langle N_0\rangle, \langle N_1\rangle)^{-1}  N_2\langle N_2\rangle^{-\frac{k}{2}}N_1\langle N_1\rangle^{-k}\ep_1^3 \\ 
 &\les \langle s\rangle^{-1+\delta_0}\ep_1^3,
\end{align*}
where in the second inequality we used Lemma~\ref{lem:quadratic terms}.
For the remaining contribution, we utilize \eqref{I2:CM} together with \eqref{eq:norm-infty} and \eqref{PNxf} to obtain 
\begin{align*}
    \sum_{\substack{(N_{0},N_{1},N_2)\in\mathcal{N} \\ N_0\ge \langle s\rangle^{-2}}}\normo{\mathcal{I}_{\textbf{N}}^{2}(s)}_{L^{2}}
    &\les |s|\sum_{N_0\ge \langle s\rangle^{-2}} \langle N_0\rangle^2\max(\langle N_0\rangle, \langle N_1\rangle)^{-1} \normo{P_{L}|u|^{2}(s)}_{L^{\infty}}\normo{P_{N_{1}}{f}(s)}_{L^{2}} \\ 
    &\les|s|\sum_{\substack{(N_{0},N_{1},N_2)\in\mathcal{N} \\N_0\ge \langle s\rangle^{-2},\;  N_2\le \langle s\rangle^{-1}}}\langle N_0\rangle^2\max(\langle N_0\rangle, \langle N_1\rangle)^{-1}N_2^2N_1\langle N_1\rangle^{-k}\ep_1^3\\ 
    &\;\;\; + \langle s\rangle^{-1}\sum_{\substack{(N_{0},N_{1},N_2)\in\mathcal{N} \\N_0\ge \langle s\rangle^{-2},\;  N_2\ge \langle s\rangle^{-1}}}\langle N_0\rangle^2\max(\langle N_0\rangle, \langle N_1\rangle)^{-1}\langle N_2\rangle^{-k}N_1\langle N_1\rangle^{-k}\ep_1^3 \\ 
    &\les \langle s\rangle^{-1+\delta_0}\ep_1^3.
\end{align*}
\end{proof}

Let us move on to the proof of second weighted estimates \eqref{eq:second-moment}. 
\begin{proof}[Proof of \eqref{eq:second-moment}]
By Plancherel's theorem, we have
\[
\|x^{2}e^{it\jp D}u\|_{H^2}\sim\|\langle\xi\rangle^2\mathcal{F}(x^{2}e^{it\jp D}u)\|_{L^{2}}\sim\left\Vert \langle\xi\rangle^2\nabla_{\xi}^{2}\widehat{f}\,\right\Vert _{L^{2}}.
\]
The Duhamel's formula \eqref{eq:duhamel} implies that $\nabla_{\xi}^{2}\widehat{f}$
can be represented by 
\begin{align*}
    \langle\xi\rangle^2\nabla_{\xi}^{2}\widehat{f}(t,\xi) & =\langle\xi\rangle^2\nabla^{2}\widehat{u_{0}}(\xi)+\frac{i\lam}{2\pi}\sum_{j=1}^{4}\int_0^t\mathcal{J}^{j}(s,\xi)ds,
\end{align*}
where, by abusing the notation,
\begin{align}
\mathcal{J}^{1}(s,\xi) & =\langle\xi\rangle^2\int_{\mathbb{R}^{2}}e^{is\phi(\xi,\eta)}|\eta|^{-1}\nabla^{2}\widehat{f}(\xi-\eta)\mathcal{F}(|u|^{2})(\eta)d\eta ,\nonumber \\
\mathcal{J}^{2}(s,\xi) & =2is\langle\xi\rangle^2\int_{\mathbb{R}^{2}}\nabla_{\xi}\phi(\xi,\eta)e^{is\phi(\xi,\eta)}|\eta|^{-1}\nabla\widehat{f}(\xi-\eta)\mathcal{F}(|u|^{2})(\eta)d\eta ,\nonumber \\
\mathcal{J}^{3}(s,\xi) & =is\langle\xi\rangle^2\int_{\mathbb{R}^{2}}\nabla_{\xi}^2\phi(\xi,\eta)(\xi,\eta)e^{is\phi(\xi,\eta)}|\eta|^{-1}\widehat{f}(\xi-\eta)\mathcal{F}(|u|^{2})(\eta)d\eta ,\nonumber \\
\mathcal{J}^{4}(s,\xi) & =-s^{2}\langle\xi\rangle^2\int_{\mathbb{R}^{2}}\big( \nabla_{\xi}\phi(\xi,\eta)\big)^{2}e^{is\phi(\xi,\eta)}|\eta|^{-1}\widehat{f}(\xi-\eta)\mathcal{F}(|u|^{2})(\eta)d\eta. \label{eq:j4-esti}
\end{align}
Then we prove that 
\[
\sum_{j=1}^{4}\normo{\mathcal{J}^{j}(s,\xi)}_{L_{\xi}^{2}}\les\langle s\rangle^{-1+2\de_{0}}\ve_{1}^{3}, \quad \text{ for } j=1,\cdots,4.
\]
\noindent \uline{Estimates for \mbox{$\mathcal{J}^{1}$}}. By Lemma \ref{lem:norm-esti},
$\mathcal{J}^1$ can be bounded as in \eqref{I1}. Indeed, 
\begin{align*}
\normo{\langle\xi\rangle^2\mathcal{J}^{1}(s,\xi)}_{L_{\xi}^{2}(\R^2)} 
& \les  \| u(s)\|_{L^\infty} \| u(s)\|_{H^2}\|x^2f(s)\|_{H^2}   \les \bra{s}^{-1+2\de_{0}}\ve_{1}^{3}.
\end{align*}

\noindent \uline{Estimates for \mbox{$\mathcal{J}^{2}$}}.
Estimates for $\mathcal{J}^{2}$ can be done almost similarly to those for $\mathcal{I}^{2}$. 
If one follows the argument, the only difference is that the norm $\|P_{N_1}f(s)\|_{L^2}$ in inequalities \eqref{I2:holder} and \eqref{I2:CM} is replaced by the weighted norm $\|P_{N_1}xf(s)\|_{L^2}$, which can be easily dealt with once one utilizes \eqref{PNx2f}.


\medskip

\noindent \uline{Estimates for \mbox{$\mathcal{J}^{3}$}}.
$\mathcal{J}^{3}$ also can be handled in a similar manner to $\mathcal{I}^{2}$.  
Indeed, one finds that the multiplier $\nabla_{\xi}^2\phi(\xi,\eta)$ in $\mathcal{J}^{3}$ verifies the smaller bound than the one given in \eqref{bound of grad phi} satisfied by the multiplier $\nabla_{\xi}\phi(\xi,\eta)$ in $\mathcal{I}^{2}$. 
More precisely, the following bound holds
\begin{align*}
    \left| \nabla_{\xi}^2\phi(\xi,\eta)\right| 
    \les  \frac{|\eta|}{\max(\langle\xi\rangle,\langle\xi-\eta\rangle)^2}.
\end{align*}
Thus, if we let 
\begin{align*}
    \widetilde{\mathbf{m}}_{\mathbf{N}}(\xi,\eta) 
    =\langle \xi \rangle^2  |\eta|^{-1}  \nabla_{\xi}^2\phi(\xi,\eta)\rho_{N_{0}}(\xi)\rho_{N_2}(\eta)\rho_{N_{1}}(\xi-\eta) ,
\end{align*}
one can show that 
\begin{align*}
    \sup_{\xi,\eta\in\R^2}\left|\widetilde{\mathbf{m}}_{\mathbf{N}}(\xi,\eta)\right|&\les \langle N_0\rangle^2 \max(\langle N_0\rangle, \langle N_1\rangle)^{-2}, \\ C_{\widetilde{\mathbf{m}}_{\mathbf{N}}} &\les  \langle N_0\rangle^2\max(\langle N_0\rangle, \langle N_1\rangle)^{-2}.
\end{align*} 
Applying these bounds into \eqref{I2:holder} and \eqref{I2:CM}, one can obtain the desired bounds.

\medskip

\noindent \uline{Estimates for \mbox{$\mathcal{J}^{4}$}}.
It remains to estimate the main case $\mathcal{J}^{4}$. 
As before, we decompose 
\begin{align*}
    \mathcal{J}^{4 }(s,\xi)&=\sum_{\textbf{N}=(N_{0},N_{1},N_2)\in (2^{\Z})^3}\mathcal{J}_{\textbf{N}}^{4}(s,\xi),\\
 \mathcal{J}_{\textbf{N}}^{4}(s,\xi) &:=-s^2\int_{\mathbb{R}^{2}}\mathbf{m}_{\mathbf{N}}(\xi,\eta)e^{is\phi(\xi,\eta)}\widehat{P_{N_1}f}(\xi-\eta)\widehat{P_{N_2}(|u|^{2})}(\eta)\,d\eta,
\end{align*}
where 
\begin{align*}
    \mathbf{m}_{\mathbf{N}}(\xi,\eta) 
=\langle \xi\rangle^2   |\eta|^{-1} \left( \nabla_{\xi}\phi(\xi,\eta)\right)^2\rho_{N_{0}}(\xi)\rho_{N_2}(\eta)\rho_{N_{1}}(\xi-\eta).
\end{align*}
One can readily verify that  
\begin{align}\begin{aligned}\label{pointwise bound of m2}
    \sup_{\xi,\eta\in\R^2}\left|\mathbf{m}_{\mathbf{N}}(\xi,\eta)\right|&\les N_2\langle N_0\rangle^2\max(\langle N_0\rangle, \langle N_1\rangle)^{-2}, \\ 
     C_{\mathbf{m}_{\mathbf{N}}}&\les  N_2\langle N_0\rangle^2\max(\langle N_0\rangle, \langle N_1\rangle)^{-2}.
\end{aligned}\end{align} 
We see that the sum over those indexes $N_0$ such that $N_0\le \langle s\rangle^{-3}$ can be dealt with:
\begin{align*}
    \|\mathcal{J}_{\textbf{N}}^{4}(s)\|_{L^2(\R^2)} &\les |s|^2\sum_{\substack{ (N_0,N_1,L)\in \mathcal{N} \\ N_0 \le \langle s\rangle^{-3}}}\|\rho_{N_{0}}\|_{L^{2}}\normo{\textbf{m}_{\mathbf{N}}}_{L^{\infty}}\normo{P_{N_2}|u|^{2}(s)}_{L^{2}}\normo{P_{N_{1}}{f}(s)\,}_{L^{2}} \\ 
    &\les   |s|^2   \sum_{N_0 \le \langle s\rangle^{-3}}N_0N_2\langle N_0\rangle^2\max(\langle N_0\rangle, \langle N_1\rangle)^{-2}N_2\langle N_2\rangle^{-\frac{k}{2}}N_1\langle N_1\rangle^{-k}\ep_1^3\\ 
    &\les \langle s\rangle^{-1}\ep_1^3.
\end{align*}
On the other hand, by the multiplier inequalities with \eqref{pointwise bound of m2},
 one has 
\begin{align}\label{J4}
    \|\mathcal{J}_{\textbf{N}}^{4}(s)\|_{L^2(\R^2)} 
    \les   |s|^2   \sum_{\substack{ (N_0,N_1,L)\in \mathcal{N} \\ N_0 \ge \langle s\rangle^{-3}}}N_2\langle N_0\rangle^2\max(\langle N_0\rangle, \langle N_1\rangle)^{-2}\normo{P_{N_2}|u(s)|^{2}}_{L^{2}}\normo{P_{N_{1}}{u}(s)\,}_{L^{\infty}}.
\end{align}
Using \eqref{eq:norm-two}, we can bound the sum in \eqref{J4} for $N_2 \le \langle s\rangle^{-1+\frac34\delta_0}$ by 
\begin{align*}
    &|s|^2   \sum_{ \substack{ N_0 \ge \langle s\rangle^{-3}, \\  N_2 \le \langle s\rangle^{-1+\frac34\delta_0} }}N_2\langle N_0\rangle^2\max(\langle N_0\rangle, \langle N_1\rangle)^{-2} N_2
     N_1^{\frac14\delta_0}\langle N_1\rangle^{-k-\frac14\delta_0}\langle t\rangle^{-1+\frac14\delta_0}\ep_1^3  \\ 
     &\les \langle s\rangle^{-1+\frac74\delta_0} 
     \sum_{N_0\ge \langle s\rangle^{-3},\; N_0\les N_1 } \langle N_0\rangle^2\max(\langle N_0\rangle, \langle N_1\rangle)^{-2} 
     N_1^{\frac14\delta_0}\langle N_1\rangle^{-k-\frac14\delta_0}\ep_1^3 \\
     &\qquad +\langle s\rangle^{-1+\frac74\delta_0} \sum_{N_1\ll N_0 \les \langle s\rangle^{-1+\frac34\delta_0}} 
     N_1^{\frac14\delta_0} \ep_1^3 \\ 
     &\les  \langle s\rangle^{-1+2\delta_0}\ep_1^3.
   \end{align*}
To estimate the sum in \eqref{J4} for $L \ge \langle s\rangle^{-1+\frac34\delta_0}$, we use \eqref{ineq:PLu2} to obtain 
\begin{align*}
    &\langle s\rangle^{-1+\frac74\delta_0}   \sum_{ \substack{ N_0 \ge \langle s\rangle^{-3}, \\ N_2 \ge \langle s\rangle^{-1+\frac34\delta_0} }}\langle N_0\rangle^2\max(\langle N_0\rangle, \langle N_1\rangle)^{-2} \langle N_2\rangle^{-1}
    N_1^{\frac14\delta_0}\langle N_1\rangle^{-k-\frac14\delta_0}\ep_1^3  \\ 
    &\les \langle s\rangle^{-1+2\delta_0}\ep_1^3.
\end{align*}
\end{proof}

\section{Modified Scattering : Proof of Theorem \ref{main-thm:semi}}\label{sec:scattering}

\global\long\def\freq{{(\xi,\eta,\sigma)}}%
In Section \ref{sec:Weighted-Energy-estimate}, we have shown that the small solutions stay small in weighted Sobolev norm. In this section we prove \eqref{eq:modified-scattering}, the asymptotic behaviors of solutions, 
and complete the proof of Theorem~\ref{main-thm:semi}. We assume that $u$ satisfies
the a priori assumption \eqref{assumption-apriori}.
The modified scattering profile is defined by 
\begin{align*}
    \mathsf{v}(t,\xi)=e^{-iB(t,\xi)}e^{it\braxi}\wh{u}(t,\xi),
\end{align*}
where the phase correction is given by 
\begin{align*}
B(t,\xi) & =\frac{\lam}{(2\pi)^{2}}\int_{0}^{t}\int_{\mathbb{R}^{2}}\left|\frac{\xi}{\langle\xi\rangle}-\frac{\sigma}{\langle\sigma\rangle}\right|^{-1}\abs{\wh{u}(\sigma)}^{2}d\sigma\frac{\rho(s^{-\frac{2}{n}}\xi)}{\bra{s}}ds.
\end{align*}
\begin{prop}\label{prop:scattering} 
Assume that $u\in C([0,T],H^{n})$
satisfies the a priori assumption \eqref{assumption-apriori} with the index conditions \eqref{eq:bundle}. Then we
get 
\begin{align}
    \normo{\braxi^{k}\Big(\mathsf{v}(t_{2},\xi)-\mathsf{v}(t_{1},\xi)\Big)}_{L_{\xi}^{\infty}}\les\ve_{1}^3\bra{t_{1}}^{-\de}.\label{eq:scattering}
\end{align}
for $t_{1}\le t_{2}\in[0,T]$ and some $0<\de\le\frac{1}{100}$. \end{prop}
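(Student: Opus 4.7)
The plan is to differentiate the modified profile in time, compute
\begin{align*}
e^{iB(s,\xi)}\partial_s\mathsf{v}(s,\xi) = \partial_s\widehat{f}(s,\xi)-i\partial_s B(s,\xi)\widehat{f}(s,\xi),
\end{align*}
and show that the right-hand side has weighted $L^\infty_\xi$ norm of order $\langle s\rangle^{-1-\delta}\ve_1^3$, so that integration in $s$ over $[t_1,t_2]$ yields \eqref{eq:scattering}. Using the symmetric trilinear form,
\begin{align*}
\partial_s\widehat{f}(s,\xi)=\frac{i\lam}{(2\pi)^3}\iint e^{isp(\xi,\eta,\sigma)}|\eta|^{-1}\widehat{f}(s,\xi+\eta)\widehat{f}(s,\xi+\sigma)\overline{\widehat{f}(s,\xi+\eta+\sigma)}\,d\eta\,d\sigma,
\end{align*}
the task reduces to extracting a principal part that exactly cancels $i\partial_s B(s,\xi)\widehat{f}(s,\xi)$, with all remaining contributions integrable in $s$.

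I would first dispense with the high-frequency regime $|\xi|\gtrsim\langle s\rangle^{2/n}$. There the cutoff $\rho(s^{-2/n}\xi)$ in \eqref{formula of B} forces $\partial_s B(s,\xi)=0$, and only $\partial_s\widehat{f}$ needs pointwise control. Using $\|u(s)\|_{L^\infty}\les\langle s\rangle^{-1}\ve_1$ from Proposition~\ref{timedecay-prop} together with \eqref{eq:hls} and the $H^n$ bound from Proposition~\ref{prop-energy}, the trilinear expression gains a factor $\langle\xi\rangle^{-n}$ on the high-frequency factor; since $k=n/100$, the weight $\langle\xi\rangle^k$ is easily absorbed and the remainder is of order $\langle s\rangle^{-1-\delta_0}\ve_1^3$.

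For $|\xi|\les\langle s\rangle^{2/n}$, I split the $\eta$-integration at the scale $\langle s\rangle^{-1+\mu}$ for a small $\mu>0$, following the heuristic \eqref{HD}. In the outer region $|\eta|\gtrsim\langle s\rangle^{-1+\mu}$, I would expand $\widehat{|u|^2}$ via \eqref{FT |u|^2} and integrate by parts in $\sigma$ using $|\nabla_\sigma(\langle\sigma+\eta\rangle-\langle\sigma\rangle)|\gtrsim|\eta|\langle\sigma\rangle^{-3}$, exactly the space-resonance mechanism employed in \eqref{ineq:PLu2}. This gains an extra $\langle s\rangle^{-1}|\eta|^{-1}$ at the cost of an $\|xf\|_{L^2}$-type factor; combining the scattering norm $\|\langle\xi\rangle^k\widehat{u}\|_{L^\infty}$ on $\widehat{f}(\xi-\eta)$ with $L^2\times L^\infty$ bounds on the quadratic piece produces a contribution of order $\langle s\rangle^{-1-\mu+C\delta_0}\ve_1^3$ after integrating in $\eta$. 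In the inner region $|\eta|\les\langle s\rangle^{-1+\mu}$ I would Taylor-expand
\begin{align*}
p(\xi,\eta,\sigma)=\eta\cdot\Big(\tfrac{\xi}{\langle\xi\rangle}-\tfrac{\xi+\sigma}{\langle\xi+\sigma\rangle}\Big)+O(|\eta|^2),
\end{align*}
and replace $\widehat{f}(s,\xi+\eta)$, $\widehat{f}(s,\xi+\eta+\sigma)$ by their values at $\eta=0$. Each replacement or quadratic phase error is bounded pointwise by $|\eta|$ or $|\eta|^2$ times weighted norms controlled by Proposition~\ref{prop-energy}, and after multiplying by $|\eta|^{-1}$ the $\eta$-integral remains convergent, producing another $\langle s\rangle^{-1-\mu+C\delta_0}$. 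The resulting main term
\begin{align*}
\widehat{f}(s,\xi)\int_{\mathbb{R}^2}|\widehat{f}(s,\xi+\sigma)|^2\bigg(\int_{|\eta|\les\langle s\rangle^{-1+\mu}}e^{is\eta\cdot(\frac{\xi}{\langle\xi\rangle}-\frac{\xi+\sigma}{\langle\xi+\sigma\rangle})}|\eta|^{-1}\,d\eta\bigg)d\sigma
\end{align*}
has its inner integral extended to all of $\mathbb{R}^2$ (the tail handled by a further integration by parts in $\eta$ when $\big|\tfrac{\xi}{\langle\xi\rangle}-\tfrac{\xi+\sigma}{\langle\xi+\sigma\rangle}\big|\gtrsim\langle s\rangle^{-\mu}$, and estimated directly otherwise). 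The homogeneity $\mathcal{F}^{-1}(|\cdot|^{-1})(z)=c|z|^{-1}$ in two dimensions then yields a factor $c\,\langle s\rangle^{-1}\big|\tfrac{\xi}{\langle\xi\rangle}-\tfrac{\xi+\sigma}{\langle\xi+\sigma\rangle}\big|^{-1}$, and after the change of variable $\sigma\mapsto\sigma-\xi$ together with $|\widehat{f}|=|\widehat{u}|$ the resulting expression is identified precisely with $i\partial_s B(s,\xi)\widehat{f}(s,\xi)$.

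The main obstacle will be the pointwise-in-$\xi$ bookkeeping, which differs in character from the $L^2_\xi$ estimates of Section~\ref{sec:Weighted-Energy-estimate}. The weight $\langle\xi\rangle^k$ must be carried by a single factor through the scattering norm $\|\langle\xi\rangle^k\widehat{u}\|_{L^\infty}$, while the remaining factors absorb the $\|xf\|_{L^2}$-type growths; the negative powers of $\big|\tfrac{\xi}{\langle\xi\rangle}-\tfrac{\xi+\sigma}{\langle\xi+\sigma\rangle}\big|$ arising in the space-resonance and tail-extension steps are controlled through the null structure $|\eta|$ exactly as in \eqref{bound of grad phi}. Choosing $\mu$ as a small fixed multiple of $\delta_0$ then makes every remainder $\langle s\rangle^{-1-\delta}$-integrable for some $0<\delta<\delta_0$, so that \eqref{eq:scattering} holds and the scattering state $u_\infty=\mathcal{F}^{-1}(\lim_{t\to\infty}\mathsf{v}(t,\cdot))$ is produced by completeness of the weighted $L^\infty_\xi$ norm.
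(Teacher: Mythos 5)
The proposal is correct in spirit and reproduces the paper's overall strategy (differentiate $\mathsf{v}$, cancel the small-$\eta$ leading contribution against $\partial_s B$, estimate the errors term by term), but you handle the large-$\eta$ remainder by a genuinely different mechanism. The paper works on dyadic time intervals $[M-2,2M]$, fixes a single cutoff scale $L_0\sim M^{-9/10}$, localizes $\sigma$ dyadically at scale $L'$, and integrates by parts twice \emph{in $\eta$}, exploiting $|\nabla_\eta q|\gtrsim|\sigma|$; this requires the auxiliary $L'$-decomposition and a separate direct estimate in the regime $L(L')^2\le M^{-(1+2\de)}$. You instead keep the cutoff time-dependent at scale $\langle s\rangle^{-1+\mu}$ and integrate by parts \emph{in $\sigma$} inside $\widehat{|u|^2}(\eta)$, using $|\nabla_\sigma q|\gtrsim|\eta|\langle\sigma\rangle^{-3}$ so that the gain matches and cancels the $|\eta|^{-1}$ singularity; effectively you recycle Lemma~\ref{lem:quadratic terms} (its $L^2$ bound, paired against the scattering norm on $\widehat f(\xi-\eta)$ through $L^1_\eta$) instead of rebuilding a bespoke trilinear integration-by-parts argument. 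This is more modular, at the price that (i) you must carry the $\langle\xi\rangle^k$ weight through the $L^1_\eta$-pairing at high $\eta$-frequencies (where $\langle\xi-\eta\rangle^k\not\sim\langle\xi\rangle^k$ and you have to switch to the Sobolev decay $\langle\xi-\eta\rangle^{-n}$), and (ii) in Step~3 your split at $|\mathbf{z}|\sim\langle s\rangle^{-\mu}$ requires controlling the measure of $\{|\mathbf{z}|\lesssim\langle s\rangle^{-\mu}\}$, which involves the Jacobian degeneration of $\sigma\mapsto\frac{\xi+\sigma}{\langle\xi+\sigma\rangle}$; the paper sidesteps this by interpolating to get the uniformly integrable bound $M^{-16/15}|\mathbf{z}|^{-5/3}$. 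These are detail-level issues rather than fundamental obstructions, and your inner-region treatment (phase Taylor expansion, profile replacement, and identification of the main term with $\partial_s B$ via $\mathcal{F}^{-1}(|\cdot|^{-1})=c|\cdot|^{-1}$) is essentially identical to the paper's Steps~1--3.
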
 

By assuming Proposition~\ref{prop:scattering}, we first prove our main theorem.

\smallskip

\begin{proof}[Proof of Theorem \ref{main-thm:semi}]
For the proof of global behavior of solutions, it takes precedence to show the existence of a local solution to \eqref{main-eq:semi} in $\Sigma_{T}$. However,
since it is straightforward from the contraction mapping principle, we may omit the proof (for instance see
\cite{choz2006-siam,lee2021-bkms,hele2014}).
Now, given $T>0$, we assume that $\psi$ is a solution to \eqref{main-eq:semi}
on $[0,T]$ with initial data condition \eqref{condition-initial:semi}.
Then, by the bootstrap argument, it suffices to prove that for sufficiently small $\ve_{1}>0$,
there exists $C>0$ such that if $\|u\|_{\Sigma_{T}}\le \ep_1$,
\begin{align}
\|u\|_{\Sigma_{T}}\le\ve_{0}+C\ve_{1}^{3}.\label{eq:contraction}
\end{align}
From \eqref{eq:scattering}, one sees that the scattering norm stays bounded
\[
\|u(t)\|_{S}\le\ve_{0}+C\ve_{1}^{3}, \;\;\text{ for } t\in[0,T].
\]
Thus, together with the weighted energy estimates \eqref{eq:first-moment}
and \eqref{eq:second-moment}, we can close the bootstrapping argument and obtain the global existence of solution.
Concerning \eqref{eq:modified-scattering}, the asymptotic behavior of the solution,
we define a scattering profile by 
\[
u_{\infty}:=\mathcal{F}^{-1}\left(\lim_{t\to\infty}\mathsf{v}(t,\cdot)\right).
\]
Then Proposition \ref{prop:scattering} immediately yields that for $t\in[0,T]$,
\begin{align*}
\left\Vert \bra{\xi}^{k}\left(\widehat{u}(t,\xi)-e^{iB(t,\xi)}e^{-it\bra{\xi}}\wh{u_{\infty}}(\xi)\right)\right\Vert _{L_{\xi}^{\infty}}\les\ve_{1}^3\bra{t}^{-\de}.
\end{align*}    
\end{proof}

\smallskip 

\noindent\emph{Proof of Proposition \ref{prop:scattering}.}
We will proceed as in \cite{pusa}. We prove that if $t_1\le t_{2}\in [M-2,2M]\cap[0,T]$ for
a dyadic number $M\in2^{\mathbb{N}}$
\begin{align}
\normo{\braxi^{k}\Big(\mathsf{v}(t_{2},\xi)-\mathsf{v}(t_{1},\xi)\Big)}_{L_{\xi}^{\infty}}\les\ve_{1}^{3}M^{-\de},\label{goal-modi}
\end{align}
for some $0<\delta\le\frac{1}{100}$.
We begin with writing $\mathsf{v}$ as 
\[
\mathsf{v}(t,\xi)=e^{-iB(t,\xi)}\wh{f}(t,\xi)=e^{-iB(t,\xi)}\wh{u_{0}}(\xi)+i\lam e^{-iB(t,\xi)}\mathcal{I}(t,\xi),
\]
where, after change of variables, the nonlinear term $\mathcal{I}$ is given by 
\begin{align*}
 & \mathcal{I}(t,\xi)=\frac{1}{(2\pi)^3}\int_{0}^{t}\iint_{\R^{2}\times\R^{2}}e^{isq\freq}|\eta|^{-1}\wh{f}(s,\xi+\eta)\wh{f}(s,\xi+\sigma)\overline{\wh{f}(s,\xi+\eta+\sigma)}d\eta d\sigma ds,
\end{align*}
where a resonant function
\[
q\freq=\braxi-\langle\xi+\eta\rangle-\langle\xi+\sigma\rangle+\bra{\xi+\eta+\sigma}.
\]
Let $L_{0}\in2^{\Z}$ such that 
\begin{align}
L_{0}\sim M^{-\frac{9}{10}}.\label{eq:lzero}
\end{align}
We write  
\begin{align}
\begin{aligned}\label{eq:duhamel-lzero}\mathcal{I}(t,\xi)=\int_{0}^{t}\bigg(\mathcal{K}_{L_{0}}(s,\xi)+\sum_{L\in2^{\Z},L>L_{0}}\mathcal{K}_{L}(s,\xi)\bigg)ds,\end{aligned}
\end{align}
where 
\begin{align*}
\mathcal{K}_{L_{0}}(s,\xi) & :=\frac{1}{(2\pi)^3}\iint_{\R^{2}\times\R^{2}}e^{isq\freq}\rho_{\le L_{0}}(\eta)|\eta|^{-1}\wh{f}(s,\xi+\eta)\wh{f}(s,\xi+\sigma)\overline{\wh{f}(s,\xi+\eta+\sigma)}d\eta d\sigma,\\
\mathcal{K}_{L}(s,\xi) & :=\frac{1}{(2\pi)^3}\iint_{\R^{2}\times\R^{2}}e^{isq\freq}\rho_{L}(\eta)|\eta|^{-1}\wh{f}(s,\xi+\eta)\wh{f}(s,\xi+\sigma)\overline{\wh{f}(s,\xi+\eta+\sigma)}d\eta d\sigma.
\end{align*}
The first term $\mathcal{K}_{L_{0}}$, the integral around the singular point,
is the one responsible for the correction of scattering, whereas the second term $\mathcal{K}_{L}$ is remainder term. The profile $\mathsf{v}$ verifies  
\begin{align*}
\partial_{t}\mathsf{v}(t,\xi) & =\partial_{t}\left[e^{-iB(t,\xi)}\wh{f}(t,\xi)\right]\\
 & =i\lam e^{-iB(t,\xi)}\left[\left(\mathcal{K}_{L_{0}}(t,\xi)+\sum_{L>L_{0}}\mathcal{K}_{L}(t,\xi)\right)-\frac{1}{\lam}\left[\partial_{t}B(t,\xi)\right]\wh{f}(t,\xi)\right].
\end{align*}
Thus,
\begin{align}
\begin{aligned}\label{eq:deriv-v}  \mathsf{v}(t_{2},\xi)-\mathsf{v}(t_{1},\xi)
  &=\int_{t_{1}}^{t_{2}}\partial_{s}\mathsf{v}(s,\xi)ds\\
 & =i\lam\int_{t_{1}}^{t_{2}}e^{-iB(s,\xi)}\left[\left(\mathcal{K}_{L_{0}}(s,\xi)+\sum_{L>L_{0}}\mathcal{K}_{L}(s,\xi)\right)-\frac{1}{\lam}\left[\partial_{s}B(s,\xi)\right]\wh{f}(s,\xi)\right]ds.
\end{aligned}
\end{align}
In order to prove \eqref{goal-modi}, we use the cancellation effect
between $ \mathcal{K}_{L_{0}} $ and $\partial_{s}B(s,\xi)$, specifically, we show that for each $\xi$ with $|\xi|\sim N\in2^{\Z}$,
\begin{align}
\begin{aligned} & \left|\int_{t_{1}}^{t_{2}}e^{-iB(s,\xi)}\left(\mathcal{K}_{L_{0}}(s,\xi)-\frac{1}{\lam}\left[\partial_{s}B(s,\xi)\right]\wh{f}(s,\xi)\right)ds\right|\les\ve_{1}^{3}M^{-\de}\bra{N}^{-k}\label{eq:part-modification}\end{aligned}
\end{align}
and 
\begin{align}
\left|\int_{t_{1}}^{t_{2}}e^{-iB(s,\xi)}\sum_{L>L_{0}}\mathcal{K}_{L}(s,\xi)ds\right| & \les\ve_{1}^{3}M^{-\de}\bra{N}^{-k},\label{eq:part-scattering}
\end{align}
for some $0<\delta\le \frac{1}{100}$.

\smallskip
\noindent\emph{Proof of \eqref{eq:part-modification}.} 
We prove the following two bounds:
\begin{align}
\left|\int_{t_{1}}^{t_{2}}e^{-iB(s,\xi)}\mathcal{K}_{L_{0}}(s,\xi)\left(1-\rho\left(s^{-\frac{2}{n}}\xi\right)\right)ds\right| & \les\ve_{1}^{3}M^{-\de}\bra{N}^{-k},\label{eq:decay-part}\\
\left|\int_{t_{1}}^{t_{2}}e^{-iB(s,\xi)}\left[\mathcal{K}_{L_{0}}(s,\xi)\rho\left(s^{-\frac{2}{n}}\xi\right)-\frac{1}{\lam}\left[\partial_{s}B(s,\xi)\right]\wh{f(s,\xi)}\right]ds\right| & \les\ve_{1}^{3}M^{-\de}\bra{N}^{-k},\label{eq:crucial-part}
\end{align}
where $\rho\in C_{0}^{\infty}(B(0,2))$.
We remark that the phase correction will be derived in the proof of \eqref{eq:crucial-part}.

We first consider \eqref{eq:decay-part}. It suffices to show the integrand bound 
\begin{align*}
\Big|\mathcal{K}_{L_{0}}(s,\xi)\Big|\les\ve_{1}^{3}M^{-(1+\de)}\bra{N}^{-k}.
\end{align*}
We further split $\mathcal{K}_{L_{0}}$ dyadically as follows:
\begin{align*}
\mathcal{K}_{L_{0}}(s,\xi)=\sum_{L_{1}\le L_{0}+10}\mathcal{K}_{L_{0},L_{1}}(s,\xi),
\end{align*}
where 
\begin{align*}
\mathcal{K}_{L_{0},L_{1}}(s,\xi)  =\frac{1}{(2\pi)^3}\int\!\!\!\!\int e^{isq\freq}\rho_{L_{1}}(\eta)\rho_{\le L_{0}}(\eta)|\eta|^{-1}\wh{f}(s,\xi+\eta)\wh{f}(s,\xi+\sigma)\overline{\wh{f}(s,\xi+\eta+\sigma)}d\eta d\sigma.
\end{align*}
Then, by the a priori assumption \eqref{assumption-apriori}, we estimate
\begin{align*}
 & \left|\mathcal{K}_{L_{0},L_{1}}(s,\xi)\right|\\
 & \;\;\les L_{1}^{-1}\bra{\xi}^{-k}\int\!\!\!\!\int\left|\bra{N}^{k}\rho_{L_{1}}(\eta)\rho_{\le L_{0}}(\eta)\wh{f}(s,\xi+\eta)\wh{f}(s,\xi+\sigma)\overline{\wh{f}(s,\xi+\eta+\sigma)}\right|d\eta d\sigma\\
 & \;\;\les L_{1}^{-1}\bra{N}^{-k}L_{1}^{2}\left\Vert \bra{\xi}^{k}\wh{f}\,\right\Vert _{L_{\xi}^{\infty}}\|u\|_{H^{n}}\|u\|_{H^{n}}\\
 & \;\;\les\ve_{1}^{3}L_{1}M^{2\delta_0}\bra{N}^{-k}.
\end{align*}
On the other hand, by H\"older inequality, we get 
\begin{align*}
\left|\mathcal{K}_{L_{0},L_{1}}(s,\xi)\right|\;\;\les L_{1}^{-1}\|\rho_{L_{1}}\|_{L^{2}}N^{-n}\|f\|_{H^{n}}^{3}\les\ve_{1}^{3}M^{-2}M^{3\delta_0}.
\end{align*}
These two estimates induce that 
\begin{align*}
 & \sum_{L_{1}\le L_{0}+10}\Big|\mathcal{K}_{L_{0},L_{1}}(s,\xi)\Big|\\
 & \les\ve_{1}^{3}\left(\sum_{L_{1}\le M^{-2}}L_{1}M^{2\delta_0}\bra{N}^{-k}+\sum_{M^{-2}< L_{1}<L_0+10}M^{-2+3\delta_0}\right)\\
 & \les\ve_{1}^{3}M^{-(1+\delta_0)}\bra{N}^{-k}.
\end{align*}

Next, consider \eqref{eq:crucial-part}. 
Due to the cut-off $\rho(s^{-\frac{2}{n}}\xi)$, we may assume $N\le M^{\frac{2}{n}}$.
It suffices to show that the integrand satisfies 
\begin{align}
\left|\mathcal{K}_{L_{0}}(s,\xi)-\frac{1}{\lam}\left[\partial_{s}B(s,\xi)\right]\wh{f}(s,\xi)\right| & \les\ve_{1}^{3}M^{-(1+\de)}\bra{N}^{-k}.\label{eq:proof-part2}
\end{align}
The correction term will be achieved after  three steps. 

\noindent\textit{Step1: Phase approximation.}
We approximate the phase function by a simpler one in the support of the integrand in \eqref{eq:crucial-part}. Let us observe that 
\begin{align*}
q\freq & =\Big(\braxi-\bra{\xi+\eta}\Big)-\Big(\bra{\xi+\sigma}-\bra{\xi+\eta+\sigma}\Big)\\
 & =\left(\frac{-|\eta|^{2}-2\eta\cdot\xi}{\braxi+\bra{\xi+\eta}}-\frac{-|\eta|^{2}-2\eta\cdot(\xi+\sigma)}{\bra{\xi+\sigma}+\bra{\xi+\eta+\sigma}}\right)\\
 & =\eta\cdot\left(\frac{\xi}{\braxi}-\frac{\xi+\sigma}{\bra{\xi+\sigma}}\right)+O\left(|\eta|^{2}\right)\\
 & =:r(\xi,\eta,\sigma)+O\left(|\eta|^{2}\right).
\end{align*}
We now set 
\begin{align*}
\mathcal{K}_{L_{0}}'(s,\xi) & :=\frac{1}{(2\pi)^3}\iint_{\R^{2}\times\R^{2}}e^{isr\freq}\rho_{\le L_{0}}(\eta)|\eta|^{-1}\wh{f}(s,\xi+\eta)\wh{f}(s,\xi+\sigma)\overline{\wh{f}(s,\xi+\eta+\sigma)}d\eta d\sigma.
\end{align*}
Then we estimate 
\begin{align*}
 & \left|\mathcal{K}_{L_{0}}(s,\xi)-\mathcal{K}_{L_{0}}'(s,\xi)\right|\\
 & \les\iint_{\R^{2}\times\R^{2}}|s|\left|q(\xi,\eta,\sigma)-r(\xi,\eta,\sigma)\right||\eta|^{-1}\\
 & \hspace{2.2cm}\times\left|\rho_{\le L_{0}}(\eta)\wh{f}(s,\xi+\eta)\wh{f}(s,\xi+\sigma)\overline{\wh{f}(s,\xi+\eta+\sigma)}\right|d\eta d\sigma\\
 & \les M\iint_{\R^{2}\times\R^{2}}|\eta|\left|\rho_{\le L_{0}}(\eta)\wh{f}(s,\xi+\eta)\wh{f}(s,\xi+\sigma)\overline{\wh{f}(s,\xi+\eta+\sigma)}\right|d\eta d\sigma\\
 & \les ML_{0}^{3}\normo{f}_{L^{2}}^{2}\norm{\wh{f}}_{L_{\xi}^{\infty}}\\
 & \les\ve_{1}^{3}M^{-\frac{17}{10}}\les\ve_{1}^{3}M^{-(1+\de)}\bra{N}^{-k},
\end{align*}
where we used \eqref{eq:lzero} and $\bra{N}^{k} \le M^{\frac{2k}{n}}\le M^{\frac{1}{100}}$ in the last inequality. 

\noindent\textit{Step2: Profiles approximation.}
We now approximate
$\mathcal{K}_{L_{0}}'$ by $\wt{\mathcal{K}_{L_{0}}\;}$
which is defined by 
\begin{align*}
\wt{\mathcal{K}_{L_{0}}\;}(s,\xi) & :=\frac{1}{(2\pi)^3}\iint_{\R^{2}\times\R^{2}}e^{isr\freq}\rho_{\le L_{0}}(\eta)|\eta|^{-1}\wh{f}(\xi)\left|\wh{f}(\xi+\sigma)\right|^{2}d\eta d\sigma.
\end{align*}
By setting $R=L_{0}^{-\frac{1}{2}}$, we see that 
\begin{align*}
 \left|\wh{f}(\zeta+\eta)-\wh{f}(\zeta)\right|
 & \les\left|\wh{\rho_{>R}f}(\zeta+\eta)-\wh{\rho_{>R}f}(\zeta)\right|+\left|\wh{\rho_{\le R}f}(\zeta+\eta)-\wh{\rho_{\le R}f}(\zeta)\right|\\
 & \les\normo{\wh{\rho_{>R}f}}_{L_{\xi}^{\infty}}+L_{0}\normo{\nabla_{\xi}\wh{\rho_{\le R}f}}_{L_{\xi}^{\infty}}\\
 & \les R^{-1}\normo{\bra{x}^{2}f}_{L^{2}}+L_{0}\|\rho_{\le R}\|_{L_{x}^{2}}\normo{xf}_{L^{2}}\\
 & \les L_{0}^{\frac{1}{2}}M^{2\de_{0}}.
\end{align*}
From this and \eqref{eq:lzero}, we estimate 
\begin{align*}
 & \abs{\mathcal{K}_{L_{0}}'(s,\xi)-\wt{\mathcal{K}_{L_{0}}\;}(s,\xi)}\\
 & \les\iint_{\R^{2}\times\R^{2}}\rho_{\le L_{0}}(\eta)|\eta|^{-1}\abs{\wh{f}(\xi+\eta)\wh{f}(\xi+\sigma)\overline{\wh{f}(\xi+\eta+\sigma)}-\wh{f}(\xi)\left|\wh{f}(\xi+\sigma)\right|^{2}}d\eta d\sigma\\
 & \les\ve_{1}^{3}L_{0}^{\frac{3}{2}}M^{2\de_{0}}\les\ve_{1}^{3}M^{-(1+\de)}\bra{N}^{-k}.
\end{align*}

\noindent\textit{Step3: final approximation.}
We conclude the proof of \eqref{eq:crucial-part} by showing that 
\begin{align}
\left|\wt{\mathcal{K}_{L_{0}}\;}(s,\xi)-\frac{1}{\lam}\left[\partial_{s}B(s,\xi)\right]\wh{f}(s,\xi)\right|\les\ve_{1}^{3}M^{-(1+\de)}\bra{N}^{-k}.\label{eq:cancel}
\end{align}
Setting $\mathbf{z}:=\left(\frac{\xi}{\bra{\xi}}-\frac{\sigma}{\bra{\sigma}}\right)$,
by the change of variables, we get 
\begin{align*}
\wt{\mathcal{K}_{L_{0}}\;}(s,\xi) & =\frac{1}{(2\pi)^3}\iint_{\R^{2}\times\R^{2}}e^{is\eta\cdot\mathbf{z}}\rho_{\le L_{0}}(\eta)|\eta|^{-1}\wh{f}(\xi)\left|\wh{f}(\sigma)\right|^{2}d\eta d\sigma.
\end{align*}
Then, \eqref{eq:cancel} can be reduced to showing that 
\begin{align}\begin{aligned}\label{eq:cancel2}
    &\left|\frac{1}{(2\pi)^3}\iint_{\R^{2}\times\R^{2}}e^{is\eta\cdot\mathbf{z}}\rho_{\le L_{0}}(\eta)|\eta|^{-1}\abs{\wh{f}(\sigma)}^{2}d\eta d\sigma-\frac{1}{(2\pi)^{2}|s|}\int_{\R^{3}}|\mathbf{z}|^{-1}\abs{\wh{f}(\sigma)}^{2}d\sigma\right| \\
    &\quad \les \ep_1^2 M^{-(1+\de)}.
\end{aligned}\end{align}
We first claim that 
\begin{align}\label{bound1}
    \left|\int_{\R^{2}}e^{is\eta\cdot\mathbf{z}}|\eta|^{-1}\rho_{\le L_{0}}(\eta)d\eta-(2\pi)^{2}|s\mathbf{z}|^{-1}\right|
    \les   M^{-\frac{16}{15}}|\mathbf{z}|^{-\frac53}.
\end{align}
Observe that since $\mathcal{F}(|x|^{-1})=2\pi|\eta|^{-1}$, the following formula hols 
\[
\frac{2\pi}{ |s\mathbf{z}|}=\lim_{A\to\infty}\int_{\R^2} e^{is\mathbf{z}\cdot\eta}\rho_{\le A}(\eta)\frac{1}{|\eta|}d\eta.
\]
Then we get that for $L_{0}\ll A$, 
\begin{align}
\begin{aligned}\label{eq:final-approx-1} & \left|\int_{\R^{2}}e^{is\eta\cdot\mathbf{z}}|\eta|^{-1}\rho_{\le L_{0}}(\eta)d\eta-2\pi|s\mathbf{z}|^{-1}\right|\\
 & =\left|\int_{\R^{2}}e^{is\eta\cdot\mathbf{z}}|\eta|^{-1}\left(\rho_{\le L_{0}}(\eta)-\rho_{\le A}(\eta)\right)d\eta\right|\\
 & =|s\mathbf{z}|^{-2}\left|\int_{\R^{2}}\left(\nabla_{\eta}^{2}e^{is\eta\cdot\mathbf{z}}\right)|\eta|^{-1}\left(\rho_{\le L_{0}}(\eta)-\rho_{\le A}(\eta)\right)d\eta\right| \\ 
 & \les M^{-2}|\mathbf{z}|^{-2}L_{0}^{-1}
\end{aligned}
\end{align}
and the trivial bounds 
\begin{align}
\begin{aligned}\label{eq:final-approx-2}\left|\int_{\R^{2}}e^{is\eta\cdot\mathbf{z}}|\eta|^{-1}\rho_{\le L_{0}}(\eta)d\eta-(2\pi)^{2}|s\mathbf{z}|^{-1}\right| & \les L_{0}+|s\mathbf{z}|^{-1}.
\end{aligned}
\end{align}
Then, \eqref{bound1} follows by interpolating \eqref{eq:final-approx-1} and \eqref{eq:final-approx-2}, since $L_0\sim M^{-\frac{9}{10}}$.
Now, the left-hand side of \eqref{eq:cancel2} is bounded by 
\begin{align*}
    M^{-\frac{16}{15}} \left|\int_{\R^{2}}|\mathbf{z}|^{-\frac{5}{3}}\abs{\wh{f}(\sigma)}^{2}d\sigma \right|.
\end{align*}
Since $|\mathbf{z}|\gtrsim\min\left\{ 1,|\sigma|,\frac{|\xi-\sigma|}{\bra{\sigma}^{3}}\right\} $,
the a priori assumption \eqref{assumption-apriori} yields that 
\begin{align*}
 \left|\int_{\R^{2}}|\mathbf{z}|^{-\frac{5}{3}}\abs{\wh{f}(\sigma)}^{2}d\sigma\right|\les\ve_{1}^{2}.    
\end{align*}
which completes the proof of \eqref{eq:cancel2}. 

\medskip

\noindent\emph{Proof of \eqref{eq:part-scattering}.} We further localize the frequencies as follows: 
\begin{align*}
\mathcal{K}_{L}(s,\xi) & =\frac{1}{(2\pi)^3}\sum_{\textbf{N}=(N_{1},N_{2},N_{3})\in(2^{\Z})^{3}}\mathcal{K}_{L,\mathbf{N}}(s,\xi),\\
\mathcal{K}_{L,\textbf{N}}(s,\xi) & :=\iint_{\R^{2}\times\R^{2}}e^{isq\freq}\rho_{L}(\eta)|\eta|^{-1}\wh{f_{N_{1}}}(s,\xi+\eta)\wh{f_{N_{2}}}(s,\xi+\sigma)\overline{\wh{f_{N_{3}}}(s,\xi+\eta+\sigma)}d\eta d\sigma.
\end{align*}
We prove that 
\begin{align}
\sum_{L>L_{0},\textbf{N}=(N_1,N_2,N_3)}\left|\mathcal{K}_{L,\textbf{N}}(s,\xi)\right|\les\ve_{1}^{3}M^{-(1+\de)}\bra{N}^{-k}.\label{eq:goal-high}
\end{align}
By H\"older inequality, we readily have 
\[
\left|\mathcal{K}_{L,\mathbf{N}}(s,\xi)\right|\les\prod_{j=1}^{3}\normo{\wh{f_{N_{j}}}(s)}_{L^{2}}.
\]
Using the a priori assumption \eqref{assumption-apriori}, we know that 
\[
\normo{\wh{f_{N_{j}}}(s)}_{L^{2}}\les\min{\left(N_{j}\bra{N_{j}}^{-k},\bra{N_{j}}^{-n}M^{\de_{0}}\right)}\ve_{1}\;\;\mbox{ for }\;\;j=1,2,3.
\]
The last two estimates above imply the summation in \eqref{eq:goal-high} over those indexes $(N_1,N_2,N_3)$ with 
$\max( N_{1},N_{2},N_{3})\ge M^{\frac{2}{n}}$ or 
$\min( N_{1},N_{2},N_{3})\le M^{-(1+\de)}$ satisfy the desired bound.
Thus, we suffice to estimate the sum over those indexes $(N_1,N_2,N_3)$ with 
\begin{align}
M^{-(1+\de)}\le N_{1},N_{2},N_{3}\le M^{\frac{2}{n}}.\label{eq:condi-supp}
\end{align}
Let us further localize $\sigma$ variable with respect to $L'\in2^{\Z}$ to write 
\begin{align*}
&\mathcal{K}_{\textbf{L},\mathbf{N}}(s,\xi) = \\
&\; \iint_{\R^{2}\times\R^{2}}e^{isq\freq}\rho_{L}(\eta)\rho_{L'}(\sigma)|\eta|^{-1}\wh{f_{N_{1}}}(s,\xi+\eta)\wh{f_{N_{2}}}(s,\xi+\sigma)\overline{\wh{f_{N_{3}}}(s,\xi+\eta+\sigma)}d\eta d\sigma,
\end{align*}
where we denoted $\textbf{L}=(L,L')\in (2^{\Z})^2$. 
Then, \eqref{eq:goal-high} reduces to showing that 
\begin{align}
    \sum_{\supp\in\mathcal{A}}\left|\mathcal{K}_{\textbf{L},\textbf{N}}(s,\xi)\right|\les\ve_{1}^{3}M^{-(1+\de)}\bra{N}^{-k},
\label{eq:goal2-high} 
\end{align}
where the summation runs over 
\begin{align}\begin{aligned}\label{eq:condi-piece}
    \mathcal{A}=\Big\{  \left(\textbf{L},\textbf{N}\right)\in (2^{\Z})^5 &: M^{-(1+\de)}\le N_{1},N_{2},N_{3}\le M^{\frac{2}{n}},\; \max(N_1,N_2,N_3)\sim N, \\ 
   &\hspace{3cm} L_{0}\le L \le \max(N_2,N_3), \; L'\le \max(N_1,N_3)\Big\} .
\end{aligned}\end{align}
By H\"older inequality, we also readily have 
\begin{align*}
    \left|\mathcal{K}_{L,\mathbf{N}}(s,\xi)\right| &\les
    L^{-1}\|\rho_{L}\|_{L^{1}}\|\rho_{L'}\|_{L^{1}}\langle N_1\rangle^{-k}\langle N_2\rangle^{-k}\langle N_3\rangle^{-k}\normo{\bra{\xi}^{k}\wh{f}(s,\xi)}_{L_{\xi}^{\infty}}^3 \\ 
    &\les L(L')^{2}\bra{N}^{-k}\normo{\bra{\xi}^{k}\wh{f}(s,\xi)}_{L_{\xi}^{\infty}}^3.
\end{align*}
Hence we may obtain the desired estimates \eqref{eq:goal2-high} 
whenever the summation runs over those indexes in $\mathcal{A}$ satisfying $L(L')^{2}\le M^{-(1+2\de)}$. Indeed, we have
\global\long\def\nl{{\textbf{L},\textbf{N}}}%
\begin{align*}
    \sum_{\mathcal{A}\cap\left\{ L(L')^{2}\le M^{-(1+2\de)}\right\}}|\mathcal{K}_{\nl}(s,\xi)|  
    \les \ve_{1}^{3}M^{-(1+\de)}\bra{N}^{-k}.
\end{align*}
We are left with a summation over 
\begin{align*}
    \mathcal{B}=\mathcal{A}\cap\left\{ L(L')^{2}\ge M^{-(1+2\de)}\right\}.
\end{align*}
We perform an integration by parts in $\eta$ twice, with a slight abuse of notation, to obtain 
\begin{align*}  
&|\mathcal{K}_{\nl}(s,\xi)|  \le |\mathcal{K}_{1}(s,\xi)|+|\mathcal{K}_{2}(s,\xi)|+|\mathcal{K}_{3}(s,\xi)|,\\
 &   \mathcal{K}_{1}(s,\xi)  :=\frac{1}{s^{2}}\int\!\!\!\!\int e^{isq\freq}\textbf{m}_{1}(\xi,\eta,\sigma)\nabla_{\eta}^{2}\left(\wh{f_{N_{1}}}(s,\xi+\eta)\overline{\wh{f_{N_{3}}}(s,\xi+\eta+\sigma)}\right)\wh{f_{N_{2}}}(s,\xi+\sigma)d\eta d\sigma, \\
  &      \mathcal{K}_{2}(s,\xi)  :=\frac{1}{s^{2}}\int\!\!\!\!\int e^{isq\freq}\textbf{m}_{2}(\xi,\eta,\sigma)\nabla_{\eta}\left(\wh{f_{N_{1}}}(s,\xi+\eta)\overline{\wh{f_{N_{3}}}(s,\xi+\eta+\sigma)}\right)\wh{f_{N_{2}}}(s,\xi+\sigma)d\eta d\sigma,\\
   & \mathcal{K}_{3}(s,\xi) :=\frac{1}{s^{2}}\int\!\!\!\!\int e^{isq\freq}\textbf{m}_{3}(\xi,\eta,\sigma)\wh{f_{N_{1}}}(s,\xi+\eta)\overline{\wh{f_{N_{3}}}(s,\xi+\eta+\sigma)}\wh{f_{N_{2}}}(s,\xi+\sigma)d\eta d\sigma, 
    \end{align*}
    where 
    \begin{align*}
        \textbf{m}_{1}(\xi,\eta,\sigma) & :=\left( \frac{\nabla_{\eta}q\freq}{|\nabla_{\eta}q\freq|^{2}}\right)^2|\eta|^{-1}\rho_{L}(\eta)\rho_{L'}(\sigma)\rho_{N_{1}}(\xi+\eta)\rho_{N_{3}}(\xi+\eta+\sigma),\\  
    \textbf{m}_{2}(\xi,\eta,\sigma) & :=\frac{\nabla_{\eta}q\freq}{|\nabla_{\eta}q\freq|^{2}}\nabla_{\eta}\left(\frac{\nabla_{\eta}q\freq}{|\nabla_{\eta}q\freq|^{2}}|\eta|^{-1}\rho_{L}(\eta)\rho_{N_{1}}(\xi+\eta)\rho_{N_{3}}(\xi+\eta+\sigma)\right)\rho_{L'}(\sigma),\\ 
    \textbf{m}_{3}(\xi,\eta,\sigma) & :=\nabla_{\eta}\textbf{m}_{2}\freq.
    \end{align*}
A direct computation yields that 
\begin{align}
    \begin{aligned}
    \label{lower bound of nabla q}
    \left|\nabla_{\eta}q(\xi,\eta,\sigma)\right|  
    &=\left|\frac{\xi+\eta+\sigma}{\bra{\xi+\eta+\sigma}}-\frac{\xi+\eta}{\bra{\xi+\eta}}\right| \\ 
    &\gtrsim \frac{|\sigma|}{\max(\langle \xi+\eta+\sigma\rangle, \langle\xi+\eta\rangle) \min(\langle \xi+\eta+\sigma\rangle, \langle\xi+\eta\rangle)^2}.
\end{aligned}\end{align}
With the help of \eqref{lower bound of nabla q}, one can verify that the symbols satisfy the following bounds:
\begin{align}
\left\Vert \iint_{\R^{2}\times\R^{2}}\textbf{m}_{1}\freq e^{iy\cdot\eta}e^{iz\cdot\sigma} d\eta d\sigma\right\Vert _{L_{y,z}^{1}(\R^{2}\times\R^{2})} & \les  L^{-1}(L')^{-2}\langle\max(N_1,N_3)\rangle\langle\min(N_1,N_3)\rangle^{10},\label{eq:k_1 bound} \\ 
\normo{\iint_{\R^{2}\times\R^{2}}\textbf{m}_{2}(\xi,\eta,\sigma)e^{iy\cdot\eta}e^{iz\cdot\sigma}\,d\sigma d\eta}_{L_{y,z}^{1}(\R^{2}\times\R^{2})}
&\les L^{-2}(L')^{-2}\langle\max(N_1,N_3)\rangle^2\langle\min(N_1,N_3)\rangle^{12}, \label{eq:k_2 bound}
\end{align}
and 
\begin{align}
    \begin{aligned}\label{eq:pw-bound-k3}
    \left|\textbf{m}_{3}\freq\right| & \les L^{-3}(L')^{-2}\langle\max(N_1,N_3)\rangle^2\langle\min(N_1,N_3)\rangle^{4}.
    \end{aligned}
    \end{align} 
By applying the operator inequality \eqref{eq:coif-1-1} with \eqref{eq:k_1 bound}, we estimate 
\begin{align*}
 & \sum_{(\nl)\in\mathcal{B}}\left|\mathcal{K}_{1}(s,\xi)\right|\\
 & \les M^{-2}\sum_{(\nl)\in\mathcal{B}}L^{-1}(L')^{-2}\langle\max(N_1,N_3)\rangle\langle\min(N_1,N_3)\rangle^{10} \\ 
 &\qquad\qquad \qquad\qquad\times \normo{P_{N_{2}}u}_{L_{x}^{\infty}}\left( \normo{x^{2}f}_{L^{2}}\normo{f_{N_{3}}}_{L^{2}} + \normo{xf}_{L^{2}}^2 + \normo{x^{2}f}_{L^{2}}\normo{f_{N_{1}}}_{L^{2}}\right)\\
 & \les \ve_1^3 M^{-2+2\delta+2\delta_0}\sum_{ \substack{M^{-(1+\de)}\le N_{1},N_{2},N_{3}\le M^{\frac{2}{n}} \\   L_0\le L \le \max(N_2,N_3)}}\langle\max(N_1,N_3)\rangle\langle\min(N_1,N_3)\rangle^{10} \langle N_2\rangle^{-k}\\
 &\les  \ve_1^3 M^{-2+3\delta+2\delta_0} \bra{N}^{12}  \les\ve_{1}^{3}M^{-(1+\de)}\bra{N}^{-k}.
\end{align*}
Similarly, \eqref{eq:k_2 bound} implies that
\begin{align*}
 & \sum_{(\nl)\in\mathcal{B}}\left|\mathcal{K}_{2}(s,\xi)\right|\\
 & \les M^{-2}\sum_{(\nl)\in\mathcal{B}}L^{-2}(L')^{-2}\langle\max(N_1,N_3)\rangle^2\langle\min(N_1,N_3)\rangle^{12} \\ 
 &\qquad\qquad\qquad\qquad\qquad\qquad\qquad\qquad\times \normo{P_{N_{2}}u}_{L^{\infty}}\normo{xf}_{L^{2}}\left( \normo{f_{N_{3}}}_{L^{2}} + \normo{f_{N_{1}}}_{L^{2}}\right)\\
 & \les \ve_1^3 M^{-2+\delta_0}\sum_{ \substack{M^{-(1+\de)}\le N_{1},N_{2},N_{3}\le M^{\frac{2}{n}} \\   L_0\le L \le \max(N_2,N_3)}}L^{-1}\langle\max(N_1,N_3)\rangle^2 \langle\min(N_1,N_3)\rangle^{12} \langle N_2\rangle^{-k}\\
 &\hspace{9cm}\times\Big(N_1^{\frac12}\langle N_1\rangle^{-k}+N_3^{\frac12}\langle N_3\rangle^{-k}\Big ) \\
  & \les \ve_1^3 M^{-\frac{11}{10}+\delta_0}\sum_{ M^{-(1+\de)}\le N_{1},N_{2},N_{3}\le M^{\frac{2}{n}} }\langle\max(N_1,N_3)\rangle^2 \langle\min(N_1,N_3)\rangle^{12} \langle N_2\rangle^{-k}\\
  &\hspace{9cm}\times \Big(N_1^{\frac12}\langle N_1\rangle^{-k}+N_3^{\frac12}\langle N_3\rangle^{-k}\Big ) \\
 & \les \ve_1^3 M^{-\frac{11}{10}+\delta_0}\langle N\rangle^{14}  \les\ve_{1}^{3}M^{-(1+\de)}\bra{N}^{-k}.
 \end{align*}
Finally, using the pointwise bound in \eqref{eq:pw-bound-k3}, we estimate 
\begin{align*}
    & \sum_{(\nl)\in\mathcal{B}}\left|\mathcal{K}_{3}(s,\xi)\right|\\
    & \les M^{-2}\sum_{(\nl)\in\mathcal{B}}L^{-3}(L')^{-2}\langle\max(N_1,N_3)\rangle^2\langle\min(N_1,N_3)\rangle^{4}\|\rho_{L}\|_{L^{1}}\|\rho_{L'}\|_{L^{1}}
    \normo{\wh{f_{N_{1}}}}_{L^{\infty}}\normo{\wh{f_{N_{2}}}}_{L^{\infty}}\normo{\wh{f_{N_{3}}}}_{L^{\infty}}\\
    & \les\ve_{1}^{3}M^{-2}\sum_{(\nl)\in\mathcal{B}}L^{-1}\langle\max(N_1,N_3)\rangle^2\langle\min(N_1,N_3)\rangle^{4}\bra{N_{1}}^{-k}\bra{N_{2}}^{-k}\bra{N_{3}}^{-k}\\
    & \les\ve_{1}^{3}M^{-\frac{11}{10}}\sum_{ \substack{M^{-(1+\de)}\le N_{1},N_{2},N_{3}\le M^{\frac{2}{n}} \\ L'\sim \max(N_1,N_3)} }\langle\max(N_1,N_3)\rangle^2\langle\min(N_1,N_3)\rangle^{4}\bra{N_{1}}^{-k}\bra{N_{2}}^{-k}\bra{N_{3}}^{-k}\\
    & \les\ve_{1}^{3}M^{-(1+\de)}\bra{N}^{-k}.
   \end{align*}

\subsection*{Acknowledgement}

The first and second authors were supported by the National Research
Foundation of Korea(NRF) grant funded by the Korea government(MSIT)
(No. NRF-2019R1A5A1028324). The first author was supported in part
by NRF-2022R1A2C1091499. The second author was supported in part by
NRF-2022R1I1A1A0105640812. The third author was supported in part by
NRF-2021R1C1C1005700. 


 \bibliographystyle{plain}

\medskip{}

\end{document}